\documentclass[12pt]{amsart}

\usepackage{amssymb,amsthm,amsmath,amstext,amsxtra}
\usepackage{fullpage}
\usepackage{bm}       
\usepackage{mathtools} 
\usepackage{booktabs}
\usepackage{hyperref}
\hypersetup{colorlinks=true,urlcolor=blue,citecolor=blue,linkcolor=blue}
\usepackage{enumerate}
\usepackage{enumitem}
\usepackage{comment}
\usepackage{colonequals}
\usepackage{subcaption}
\usepackage{breqn}
\usepackage{longtable}
\usepackage{numprint}
\usepackage{multirow}

\usepackage{eqparbox}
\usepackage[numbers]{natbib}

\numberwithin{equation}{section}

\theoremstyle{plain}
\newtheorem*{theorema*}{Theorem A}

\newenvironment{introtheorem2}[1]{\noindent\textbf{#1.}\em}{}

\newtheorem{theorem}[equation]{Theorem}

\newtheorem{proposition}[equation]{Proposition}
\newtheorem{lemma}[equation]{Lemma}
\newtheorem{corollary}[equation]{Corollary}

\theoremstyle{definition}
\newtheorem{definition}[equation]{Definition}

\theoremstyle{remark}
\newtheorem{remark}[equation]{Remark}


\newcommand{\Z}{\mathbb{Z}}
\newcommand{\Q}{\mathbb{Q}}

\newcommand{\F}{\mathbb{F}}


\let\P\relax
\newcommand{\P}{\mathbb{P}}

\newcommand{\defi}[1]{\textsf{#1}} 	

\DeclareMathOperator{\Galois}{Gal}
\newcommand{\Gal}[2]{\Galois(#1/#2)}

\DeclareMathOperator{\Frob}{Frob}


\newcommand{\et}{\mathrm{\acute{e}t}}

\DeclareMathOperator{\alg}{{alg}}

\usepackage{xcolor}
\definecolor{darkred}{HTML}{CC1F1F}
\definecolor{green}{rgb}{.4,.7,.4}
\definecolor{blue}{rgb}{.2,.6,.75}
\definecolor{pastelb}{HTML}{3333FF}
\definecolor{pastelyellow}{rgb}{0.992157, 0.552941, 0.235294}
\definecolor{pastelorange}{rgb}{0.941176, 0.231373, 0.12549}
\definecolor{pastelred}{rgb}{0.741176, 0., 0.14902}
\definecolor{darkbrown}{rgb}{0.25098, 0., 0.0745098}

\usepackage{hyperref}
\hypersetup{pdftitle={Restrictions on Weil polynomials of Jacobians of hyperelliptic curves},
pdfauthor={Edgar Costa, Ravi Donepudi, Ravi Fernando, Valentijn Karemaker, Caleb Springer, and Mckenzie West}}
\hypersetup{colorlinks=true,linkcolor=blue,anchorcolor=blue,citecolor=blue}

\setcounter{tocdepth}{1}

\captionsetup{width=\linewidth}

\begin{document}

\title{Restrictions on Weil polynomials of Jacobians of hyperelliptic curves}

\author{Edgar Costa}
\address{Department of Mathematics, Massachusetts Institute of Technology, Cambridge, MA 02139, USA}
\email{edgarc@mit.edu}
\urladdr{\url{https://edgarcosta.org}}

\author{Ravi Donepudi}
\address{Department of Mathematics, University of Illinois at Urbana-Champaign, Urbana, IL, 61801, USA}
\email{donepud2@illinois.edu}
\urladdr{\url{https://faculty.math.illinois.edu/~donepud2/}}

\author{Ravi Fernando}
\address{Department of Mathematics, University of California --- Berkeley, Berkeley, CA 94720, USA}
\email{fernando@berkeley.edu}
\urladdr{\url{https://math.berkeley.edu/~fernando/}}

\author{Valentijn Karemaker}
\address{Mathematical Institute, Utrecht University, 3508 TA Utrecht, the Netherlands}
\email{V.Z.Karemaker@uu.nl}
\urladdr{\url{http://www.staff.science.uu.nl/~karem001/}}

\author{Caleb Springer}
\address{Department of Mathematics, The Pennsylvania State University, University Park, PA 16802, USA}
\email{cks5320@psu.edu}
\urladdr{\url{http://personal.psu.edu/cks5320/}}

\author{Mckenzie West}
\address{Department of Mathematics, University of Wisconsin-Eau Claire, Eau Claire, WI 54701, USA}
\email{westmr@uwec.edu}
\urladdr{\url{https://people.uwec.edu/westmr/}}

\begin{abstract}
Inspired by experimental data, we investigates which isogeny classes of abelian varieties defined over a finite field of odd characteristic contain the Jacobian of a hyperelliptic curve.  We provide a necessary condition by demonstrating that the Weil polynomial of a hyperelliptic Jacobian must have a particular form modulo 2.  For fixed ${g\geq1}$, the proportion of isogeny classes of $g$-dimensional abelian varieties defined over $\F_q$ which fail this condition is $1 - Q(2g + 2)/2^g$ as $q\to\infty$ ranges over odd prime powers, where $Q(n)$ denotes the number of partitions of $n$ into odd parts.
\end{abstract}

\date{\today}

\maketitle

\section{Introduction}

The question of which abelian varieties arise as Jacobians of curves has a long and rich history.
It has classically been investigated over the complex numbers as the Schottky Problem, using techniques from differential geometry and Hodge theory.
In positive characteristic, some of these tools are no longer available, but instead the Frobenius endomorphism becomes a formidable weapon.

In this article, we study $g$-dimensional abelian varieties which are defined over a finite field $\F_q$ of odd cardinality.
The $\F_q$-isogeny class of such an abelian variety $A$ is uniquely determined by the characteristic polynomial of its Frobenius endomorphism.
This polynomial is called the \defi{Weil polynomial} of the abelian variety and has the following form:
\begin{equation*}
  Z_A(t) =
    t^{2g} + a_1 t^{2g-1} +
    \cdots
    + a_{g-1} t^{g + 1} + a_g t^g + a_{g-1} q t^{g-1} +
    \cdots
    + a_1 q^{g-1}t
    + q^g
    \in \Z[t].
\end{equation*}
Given $Z_A(t)$, one would like to to determine if the isogeny class of $A$ contains the Jacobian of a smooth curve over $\F_q$.

In genus one, the problem is straightforward, as every one-dimensional abelian variety is an elliptic curve, which is isomorphic to its Jacobian.
For genus two, the problem was solved by Howe, Nart, and Ritzenthaler~\cite{hnr} who give an explicit classification using only elementary restrictions involving the integers $a_1, a_2$, and $q$.
Their method relies on three key facts:
\begin{enumerate}
    \item the Jacobian of a curve is canonically principally polarized;
    \item a principally polarized abelian surface is isomorphic to exactly one of the following: a Jacobian of a genus two curve, a product of two elliptic curves,  or the restriction of scalars of an elliptic curve from a quadratic extension of the ground field;
    \item every genus two curve is hyperelliptic.
\end{enumerate}
 In particular, they take advantage of the canonical involution associated with any genus two curve.

Solving this problem in higher genus appears to be significantly more complicated.
For example, every curve of genus three is isomorphic to either a hyperelliptic curve or to a smooth plane quartic curve.
Curves of the latter type generically do not possess any non-trivial automorphisms, so the arguments of Howe, Nart and Ritzenthaler for genus two cannot easily be extended to non-hyperelliptic genus three curves.

In this paper, we focus on a more accessible question, namely, whether a given isogeny class of abelian varieties contains the Jacobian of a hyperelliptic curve.
We do this by studying the geometric configurations of the Weierstrass points of hyperelliptic curves, as is done in the $g=2$ case, e.g., \cite[Appendix]{mn}.
In doing so, we obtain parity conditions on the coefficients of the Weil polynomials which prevent certain isogeny classes from containing the Jacobian of a hyperelliptic curve.
For instance, in genus three we obtain:

\begin{introtheorem2}{Theorem~\ref{main-thm-g3}}
Let $q$ be an odd prime power. The isogeny classes of three-dimensional abelian varieties corresponding to Weil polynomials of the form
\[
t^6 + a_1 t^5 + a_2 t^4 + a_3 t^3 + q a_2 t^2 + q^2 a_1 t + q^3
\]
with  $a_2\equiv 0 \pmod 2$ and $a_3\equiv 1 \pmod 2$ do not contain the Jacobian of a hyperelliptic curve over $\F_q$.
\end{introtheorem2}

In fact, we can perform the same analysis for abelian varieties of any dimension; as far as we know, this is the first non-existence result for Jacobians in isogeny classes when $g > 2$ and $q$ is any odd prime power.
Indeed, we demonstrate that the Weil polynomial of the Jacobian of a hyperelliptic curve of genus $g$ over a finite field of odd characteristic must be congruent modulo 2 to a polynomial of the form $\prod_{i = 1}^r (t^{d_i} - 1)/(t - 1)^2 \in \F_2[t]$ where $2g + 2 = d_1 + \dots + d_r$ is a partition.
We call a Weil polynomial \defi{admissible} if it takes this form modulo 2, and \defi{inadmissible} otherwise.  Notice that it is easy to test whether a polynomial is admissible by explicitly trying all possible partitions, and an isogeny class is guaranteed to not contain a hyperelliptic Jacobian if its Weil polynomial is inadmissible.

By  using asymptotic results on the number of Weil polynomials of a fixed degree whose coefficients lie in prescribed congruence classes modulo an integer, it is possible to determine the asymptotic proportion of isogeny classes with admissible, or inadmissible, Weil polynomials.

\begin{introtheorem2}{Theorem~\ref{thm:asymptotic_hyper}}
Let $c(q,g)$ be the proportion of isogeny classes of $g$-dimensional abelian varieties over $\F_q$ with admissible Weil polynomial.
For all $g \geq 2$ we have
\[
	\lim_{q\to\infty} c(q,g) = \frac{Q(2g +2)}{2^g},
\]
as $q$ ranges over odd prime powers, where $Q(2g+2)$ is the number of partitions of $2g + 2$ into distinct parts.
\end{introtheorem2}

For comparison, notice that for fixed $g > 6$, it is possible to see by simply counting the number of isogeny classes of $g$-dimensional abelian varieties over $\F_q$ versus the number of hyperelliptic curves of genus $g$ over $\F_q$  that actually $0\%$ of $g$-dimensional abelian varieties over $\F_q$ contain a hyperelliptic Jacobian as $q\to\infty$ ranges over odd prime powers.
However, this counting argument does not provide a way to identify which isogeny classes do or do not contain a hyperelliptic Jacobian.
In contrast, for any fixed $g \geq 1$, any isogeny class with an inadmissible Weil polynomial is explicitly known to not contain a hyperelliptic Jacobian.
Rephrasing the theorem above for fixed $g > 6$, the proportion of isogeny classes of $g$-dimensional abelian varieties over $\F_q$ which do not contain a hyperelliptic Jacobian but are not found by testing for inadmissible Weil polynomials is $Q(2g + 2)/2^g$ as $q\to\infty$ ranges over odd prime powers. As noted in Remark \ref{rem:disc}, this discrepancy approaches zero as $g$ grows.

On the other hand, over the algebraic closure $\overline{\F}_q$, determining which isogeny classes contain Jacobians remains an open question. 
In order for our methods to shed light in this context, the behavior of admissibility under finite field extensions must first be understood. We leave this question for future work.

The outline of the paper is as follows.
In Section~\ref{sec:weilpolymod2}, we recall some results on the geometry of hyperelliptic curves and their Jacobians and prove
Theorem~\ref{main-thm-g3} and its generalizations to higher genus.
In Section~\ref{sec:asymptotics}, we study asymptotic consequences of these non-existence results, deriving
Theorem~\ref{thm:asymptotic_hyper}.
In Section~\ref{sec:pointcounts}, we determine restrictions on point counts of curves, allowing us to reprove Theorem~\ref{main-thm-g3} by elementary methods.
Finally, in Section~\ref{sec:experimentaldata}, we present experimental data on the optimality of our results.

\subsection*{Acknowledgements}
This project started at the AMS MRC workshop ``Explicit methods in arithmetic geometry in characteristic $p$'' held at Whispering Pines (RI).
The authors thank Andrew Sutherland for suggesting this problem, and Jeff Achter, Everett Howe, Bjorn Poonen, Christophe Ritzenthaler, David Roe, Andrew Sutherland, and Christelle Vincent for helpful discussions.
They also thank the anonymous referees for careful reading and useful feedback.
The first author was supported by the Simons Collaboration in Arithmetic Geometry, Number Theory, and Computation via Simons Foundation grant 550033. 
The third author was partially supported by NSF RTG grant DMS-1646385.
The fourth author was supported by an AMS--Simons Travel Grant.
The fifth author was partially supported by National Science Foundation award CNS-1617802.

\section{Weil polynomials mod 2}\label{sec:weilpolymod2}

Let $C$ be a hyperelliptic curve of genus $g$ defined over $\F_q$ (of characteristic $p \neq 2$) with canonical degree two map $\pi\colon C\to \P^1$.
Let $W \colonequals \{\alpha_1,\alpha_2,\dots ,\alpha_{2g+2}\}$ be the support of ramification divisor of $\pi$, i.e., the geometric Weierstrass points of $C$.
The $\F_q$-Frobenius endomorphism $\Frob$ acts on $W$ by permuting its elements; we denote the cardinalities of the orbits of this action by $d_i$.
(Alternatively, $W$ consists of some number, $r$, of $\F_q$-places with respective degrees $d_i$, where $d_1 + \cdots +d_r = 2g+2$.)

The multiset $d_C \coloneqq \{d_i\}$ forms a partition of the integer $2g+2$; we call this the \defi{degree set} of the curve $C$, and by convention we order the $d_i$ so that $d_1 \leq d_2 \leq \cdots \leq d_r$.

Explicitly, $C$ can be given in coordinates by $y^2 = f(x)$, where $f$ is a squarefree polynomial of degree either $2g+1$ or $2g+2$.
Then $W$ consists of the points $(\alpha, 0)$, where $\alpha$ runs over the roots of $f$, together with the point at infinity if $\deg(f) = 2g+1$.
Then the $d_i$ are precisely the degrees of the irreducible factors of $f$, along with an extra 1 in the case $\deg(f) = 2g+1$.

\begin{remark}\label{rem:smallqpartititions}
If $q$ is small compared to the genus, not all partitions of $2g+2$ may arise as degree sets of genus $g$ hyperelliptic curves, due to the finite number of irreducible polynomials of any fixed degree in $\F_q[x]$.
For example, if $2g + 2 > q + 1$, then the partition $2g + 2 = 1 + \cdots + 1$ cannot be realized, as there are not enough $\F_q$-points for $C$ to ramify over.
\end{remark}

Let $J$ denote the Jacobian variety of $C$ whose elements are degree-zero divisors on $C$ modulo linear equivalence.
Then the group $J[2]$ of (geometric) $2$-torsion of $J$ is an $\F_2$-vector space that admits the following explicit description.

\begin{lemma}\label{lem:quotient}
  The group $J[2]$ of $2$-torsion elements of $J$ forms a $2g$-dimensional vector space over $\F_2$.
  Explicitly, this group can be expressed as the vector space obtained from $\F_2^{2g+2}$ by considering all vectors with an even number of non-zero entries and forming the quotient by $\langle(1,1,\ldots, 1)\rangle$.
\end{lemma}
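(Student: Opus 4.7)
The plan is to construct an explicit isomorphism between $J[2]$ and the quotient described in the statement, exploiting the hyperelliptic involution $\iota$ on $C$. The starting point is the standard fact that the Jacobian of a smooth genus $g$ curve in odd characteristic has $J[2] \cong (\Z/2\Z)^{2g}$ as an $\F_2$-vector space, so the dimension count $2g$ is automatic. The task is thus to produce the claimed parametrization.

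I would define a map $\phi \colon V \to J[2]$, where $V = \{S \subseteq W : |S| \text{ even}\} \subset \F_2^{W}$, by
\[
\phi(S) = \Big[\, \sum_{\alpha \in S}[\alpha] \;-\; \tfrac{|S|}{2}\, g^1_2 \,\Big],
\]
with $g^1_2$ the linear equivalence class of any fiber of $\pi$. Since $2[\alpha] \sim g^1_2$ for every Weierstrass point $\alpha$, the class $\phi(S)$ is 2-torsion, and $\phi$ is $\F_2$-linear when addition on $V$ is taken to be symmetric difference. The all-ones vector $\chi_W$ maps to zero because $\sum_{\alpha \in W}[\alpha] \sim (g+1)\,g^1_2$, which one checks by computing the divisor of the function $y$ in the model $y^2 = f(x)$, treating the cases $\deg f = 2g+1$ and $\deg f = 2g+2$ separately.

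The crux is to show that $\chi_W$ is the only non-trivial element of the kernel. Suppose $\phi(S) = 0$, so that $\sum_{\alpha \in S}[\alpha] - \tfrac{|S|}{2} g^1_2 = \mathrm{div}(h)$ for some function $h$ on $C$. Because $\iota$ fixes every Weierstrass point and $\iota^* g^1_2 = g^1_2$, the function $\iota^* h$ has the same divisor as $h$, so $\iota^* h = \pm h$. Writing $h = a(x) + b(x)\,y$ in the hyperelliptic model, the $+$ sign forces $b \equiv 0$, making $h$ a pullback from $\P^1$; its divisor is then a sum of fibers of $\pi$ and has even multiplicity at every Weierstrass point, so $S = \emptyset$. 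The $-$ sign forces $a \equiv 0$, whence $\mathrm{div}(h) = \mathrm{div}(b) + \mathrm{div}(y)$ differs from $\sum_{\alpha \in W}[\alpha] - (g+1) g^1_2$ by a pullback from $\P^1$; the same parity argument applied to the symmetric difference $S \triangle W$ gives $S = W$.

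The main obstacle is organizing this involution argument cleanly and correctly for both parities of $\deg f$, so that the divisor of $y$ is accurately identified with the ``all Weierstrass points'' class. Once the kernel is shown to be $\{0, \chi_W\}$, a dimension count closes the proof: $\dim_{\F_2} V = 2g+1$ and $\dim \ker \phi = 1$, so the image of $\phi$ has dimension $2g$ and therefore fills $J[2]$. Thus $\phi$ descends to an isomorphism $V/\langle \chi_W \rangle \xrightarrow{\sim} J[2]$, which is precisely the description asserted by the lemma.
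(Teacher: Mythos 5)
Your proposal is correct, but it takes a genuinely more self-contained route than the paper. The paper's proof of this lemma is essentially a citation: it invokes Mumford's description of $J[2]$ for hyperelliptic curves (stated over $\C$, with the remark that it extends to any algebraically closed field of characteristic $\neq 2$) to assert that every $2$-torsion class is of the form $e_U$ for $U \subseteq W$ of even cardinality and that $e_U = e_{U'}$ exactly when $U = U'$ or $U = W \setminus U'$; the rest is bookkeeping translating this into the quotient of $\F_2^{2g+2}$. You instead prove both of these assertions from scratch: you verify that $\phi$ is well defined, $2$-torsion-valued, and linear under symmetric difference; you compute the kernel via the hyperelliptic involution (decomposing $h = a(x) + b(x)y$ according to the sign of $\iota^* h / h$ and using the parity of pullback divisors at Weierstrass points, together with $\opdiv(y)$ identifying $\chi_W$ with $(g+1)g^1_2$); and you get surjectivity by comparing $\dim V - \dim\ker\phi = 2g$ with $\dim_{\F_2} J[2] = 2g$. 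What your approach buys is independence from the reference and a uniform argument in odd characteristic, at the cost of the case analysis on $\deg f$ that you correctly flag as the main bookkeeping burden; what the paper's approach buys is brevity. The one point to keep an eye on if you write out the details: when $\deg f = 2g+1$ the fiber over $\infty$ is $2(\infty)$ with $\infty$ itself a Weierstrass point, so the parity-of-multiplicities argument must be phrased so that the $g^1_2$ contribution at $\infty$ is absorbed into the even part, which your setup does handle.
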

\begin{proof}
The first claim follows since $p\neq 2$.
For the second, we argue as follows; cf.\ \cite[Corollary 2.11]{mumford} over $\mathbb{C}$, which extends to any algebraically closed field of characteristic unequal to two.
Every element of $J[2]$ can be explicitly represented either by a divisor
\[
 e_U\coloneqq\sum_{P\in U}P -|U|(\infty),
 \]
if the map $\pi\colon C\to \P^1$ is ramified at infinity, in which case the symbol $\infty$ slightly abusively denotes $\pi^{-1}(\infty) \in C$, or by a divisor
  \[
 e_U\coloneqq\sum_{P\in U}P -\frac{|U|}{2}(\infty_1+ \infty_2),
\]
if $\pi$ is split at infinity, in which case $\pi^{-1}(\infty) = \{ \infty_1, \infty_2\}$.
Either way, the set $U\subseteq W$ is a subset of even cardinality, and two divisors, $e_U$ and $e_{U'}$, represent the same element of $J[2]$ if either $U=U'$ or $U=W\setminus U'$.
In the $\F_2$-vector space $V$ whose standard basis is indexed by $W$, the above set of representatives determine the subspace of vectors with an even number of non-zero entries.
Two vectors $v_1,v_2\in V$ yield the same element of $J[2]$ precisely when their sum is contained in $\langle(1,1,\dots, 1) \rangle$.
The second claim follows.
\end{proof}

We will also need the following standard fact about vector spaces, whose proof is omitted.

\begin{lemma}\label{lem:ses}
Consider an exact sequence of vector spaces
\[
0\to W_1 \to W_2 \to W_3 \to 0
\]
and a linear map $T\colon W_2\to W_2$ such that $T(W_1)\subseteq W_1$.
We will also denote the induced map $T\colon W_3\to W_3$.
For $i \in \{1,2,3\}$, denote the characteristic polynomial of $T$ on $W_i$ by $\chi(T,W_i)$.
Then we have
\[
\chi(T,W_2)=\chi(T,W_1)\chi(T,W_3).
\]
\end{lemma}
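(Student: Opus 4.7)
The plan is to exploit the block-triangular structure of $T$ induced by any basis of $W_2$ that is adapted to the filtration $W_1\subseteq W_2$. Concretely, I would begin by choosing a basis $\{e_1,\dots,e_k\}$ of $W_1$ and extending it to a basis $\{e_1,\dots,e_k,e_{k+1},\dots,e_n\}$ of $W_2$, where $k=\dim W_1$ and $n=\dim W_2$. Standard linear algebra then gives that the images $\bar e_{k+1},\dots,\bar e_n$ under the quotient map $W_2\to W_3$ form a basis of $W_3$.

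Next, I would write the matrix $M$ of $T$ with respect to this basis of $W_2$. The hypothesis $T(W_1)\subseteq W_1$ means that for each $j\leq k$, the vector $T(e_j)$ lies in the span of $\{e_1,\dots,e_k\}$, so the lower-left $(n-k)\times k$ block of $M$ vanishes. Thus $M$ has the block upper-triangular form
\[
M=\begin{pmatrix} A & B \\ 0 & C \end{pmatrix},
\]
where $A$ is the matrix of $T\rvert_{W_1}$ in the basis $\{e_1,\dots,e_k\}$, and $C$ is, by inspection of the quotient, the matrix of the induced map on $W_3$ in the basis $\{\bar e_{k+1},\dots,\bar e_n\}$.

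Finally, I would compute $\chi(T,W_2)=\det(tI-M)$. Since $tI-M$ is still block upper triangular with diagonal blocks $tI_k-A$ and $tI_{n-k}-C$, the determinant factors as
\[
\det(tI-M)=\det(tI_k-A)\cdot\det(tI_{n-k}-C)=\chi(T,W_1)\cdot\chi(T,W_3),
\]
which proves the claim. There is no genuine obstacle here: the only non-formal input is the factorization of the determinant of a block triangular matrix, which follows either from a Laplace expansion along the first $k$ columns or from the fact that elementary column operations using the zero block do not change the determinant.
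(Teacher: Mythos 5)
Your proof is correct and is the standard block-upper-triangular argument; the paper explicitly omits the proof of this lemma as a standard fact, so there is nothing to compare against, and your argument is exactly the one the authors implicitly have in mind.
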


Denote the base-change of $C$ (resp.\ $J$) to $\overline{\F}_q$ by $C^{\alg}$ (resp.\ $J^{\alg}$).
Exploiting the action of $\Frob$ on the geometric Weierstrass points $W$, we obtain the following result on the characteristic polynomial of $\Frob$ modulo $2$.

\begin{proposition}\label{prop:2torsion}
  Let $C$ be a hyperelliptic curve of genus $g$ defined over $\F_q$.
  Let $\{d_i\}_{i=1}^r$ be the partition of $2 g + 2$ which records the sizes of the orbits of Frobenius acting on the $2 g + 2$ geometric Weierstrass points.
  For any prime $\ell \neq p$ we have
  \begin{equation}
  \label{eq:2torsion}
    \det\left( 1 - \Frob t \,|\, H_{\et}^1( C^{\alg}, \Q_{\ell}) \right)
    \equiv
    \Big(\prod_{i=1}^r {t}^{d_i} - 1 \Big) / {(t-1)}^2 \pmod{2}.
  \end{equation}
\end{proposition}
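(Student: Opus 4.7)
My strategy is to realize both sides of \eqref{eq:2torsion} as reverse characteristic polynomials of Frobenius acting on the same $\F_2$-vector space, namely the $2$-torsion $J[2]$, and then apply the explicit description from Lemma~\ref{lem:quotient} together with Lemma~\ref{lem:ses}.

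First I would argue that the left-hand side is independent of $\ell$: the characteristic polynomial of Frobenius on $H^1_{\et}(C^{\alg}, \Q_\ell)$ has integer coefficients independent of $\ell \neq p$. Since $p\neq 2$, I may take $\ell = 2$ without loss of generality. Then the reduction modulo $2$ of $\det(1 - \Frob t \mid H^1_{\et}(C^{\alg}, \Q_2))$ equals the reverse characteristic polynomial of Frobenius on the $\F_2$-vector space $T_2 J / 2 T_2 J \cong J[2]$. So it suffices to prove
\[
  \det(1 - \Frob t \mid J[2]) \;=\; \prod_{i=1}^r (t^{d_i} - 1)/(t-1)^2 \quad \text{in } \F_2[t].
\]

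Next I would invoke Lemma~\ref{lem:quotient} to identify $J[2] = V_0/L$, where $V = \F_2^{2g+2}$ has basis indexed by the Weierstrass set $W$, the hyperplane $V_0 \subset V$ consists of vectors with even weight, and $L = \langle (1,\dots,1)\rangle \subset V_0$. Frobenius acts on $V$ through its permutation action on $W$, and this action clearly preserves both $V_0$ (weight parity is invariant under permutations) and $L$ (the all-ones vector is fixed). Decomposing $V$ into cyclic orbits of sizes $d_1,\dots,d_r$ and noting that the reverse characteristic polynomial of a cyclic permutation of order $d$ is $1 - t^d$, I get
\[
  \det(1 - \Frob t \mid V) \;=\; \prod_{i=1}^r (1 - t^{d_i}).
\]
Applying Lemma~\ref{lem:ses} to the short exact sequence $0 \to V_0 \to V \to V/V_0 \to 0$, on whose $1$-dimensional quotient Frobenius must act trivially, yields $\det(1 - \Frob t \mid V_0) = \prod_i (1 - t^{d_i})/(1 - t)$. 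Applying Lemma~\ref{lem:ses} a second time to $0 \to L \to V_0 \to J[2] \to 0$, using that Frobenius fixes $L$ so contributes another factor $(1-t)$, gives
\[
  \det(1 - \Frob t \mid J[2]) \;=\; \prod_{i=1}^r (1 - t^{d_i})/(1 - t)^2,
\]
which equals $\prod_i(t^{d_i}-1)/(t-1)^2$ in $\F_2[t]$ since $-1 = 1$.

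The only substantive point to check carefully is that the claimed identities actually make sense as polynomial divisions in $\F_2[t]$, which amounts to verifying that $(1-t)^2$ divides $\prod_i (1 - t^{d_i})$; this is automatic from the exact-sequence argument above, but it can also be verified directly using that $\sum d_i = 2g+2$ is even. Bridging from the étale cohomological statement to the $\F_2$-linear-algebra computation on $J[2]$ is the only subtle ingredient; once that reduction is in place, the combinatorics of the Weierstrass orbits and Lemma~\ref{lem:ses} do essentially all of the work.
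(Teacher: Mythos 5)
Your proposal is correct and follows essentially the same route as the paper's proof: reduce to $\ell=2$ and the Frobenius action on $J[2]$, use Lemma~\ref{lem:quotient} to realize $J[2]$ as the even-weight hyperplane modulo the all-ones line in $\F_2^{2g+2}$, and apply Lemma~\ref{lem:ses} twice to the same two short exact sequences. The only cosmetic difference is that you consistently track the reverse characteristic polynomial $\det(1-\Frob t)$ rather than $\chi(\Frob,\cdot)$, which if anything is slightly cleaner since the two agree modulo $2$ only because the relevant polynomials are self-reciprocal there.
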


\begin{proof}
We have (cf.\ \cite[Corollary 9.6]{milne}) that
  \begin{align*}
    \det\left( 1 - \Frob t \,|\, H_{\et}^1( C^{\alg}, \Q_{\ell}) \right)
    &=
    \det\left( 1 - \Frob t \,|\, H_{\et}^1( J^{\alg}, \Q_{\ell}) \right)\\
    &=
    \det\left( 1 - \Frob t \,|\, V_\ell(J) \right),
  \end{align*}
where $V_{\ell}(J) = T_{\ell}(J) \otimes_{\Z_{\ell}} \Q_{\ell}$, and where $T_{\ell}(J)$ is the $\ell$-adic Tate module of $J$.
By the Weil conjectures the polynomials above all have integer coefficients.
By taking $\ell = 2$, we further have
\[
    \det\left( 1 - \Frob t \,|\, V_2(J) \right)
    \equiv
    \det\left( 1 - \Frob t \,|\, J[2] \right) \pmod{2}.
\]

As before, consider the $\F_2$-vector space $W_2\colonequals \F_2 ^{2g+2} $ whose standard basis is indexed by $W$ and which is acted on by the $\F_q$-Frobenius endomorphism $\Frob$.
By assumption, the action of $\Frob$ on $W_2$ can be represented by a block-diagonal matrix with $r$ blocks, whose $i$th block (of order $d_i$) is a cyclic permutation of basis vectors.
Since the characteristic polynomial of a cyclic permutation of order $n$ is $t^n-1$, it follows that
\begin{equation*}
\chi(\Frob, W_2)=\prod_{i=1}^r \bigl(t^{d_i}-1\bigr).
\end{equation*}
Furthermore, let $W_1$ be the codimension-one subspace of vectors with an even number of non-zero entries.
Then $W_1$ is stable under $\Frob$.
Moreover, $\Frob$ acts invertibly on $W_1$ and hence also on the one-dimensional quotient $W_3\colonequals W_2/W_1$, so that $\chi(\Frob,W_3) = t-1$.
Applying Lemma~\ref{lem:ses} to the short exact sequence $0 \to W_1 \to W_2 \to W_2/W_1 \to 0$ yields
\[
\chi(\Frob,W_1)=\frac{\prod_{i=1}^r (t^{d_i}-1)}{t-1}.
\]
Now let $W_4$ be the one-dimensional subspace of $\langle (1, 1, \ldots, 1)\rangle \subseteq W_1$ and define $W_5 \coloneqq W_1/W_4$.
Similarly, $W_4$ is stable under $\Frob$ with $\chi(\Frob, W_4) = t-1$ since $\Frob$ acts invertibly.
Applying Lemma~\ref{lem:ses} to the short exact sequence $0\to W_4\to W_1\to W_5 \to 0$, we find
\[
\chi(\Frob, W_5)=\frac{ \prod_{i=1}^r \bigr(t^{d_i}-1\bigl)}{(t-1)^2}.
\]
Since, by Lemma~\ref{lem:quotient}, $\Frob$ acts on $W_5$ as it does on $J[2]$, the result follows.
\end{proof}

\begin{remark}
It is possible to prove a congruence modulo 2 similar to \eqref{eq:2torsion} in Proposition~\ref{prop:2torsion} also when the finite field $\mathbb{F}_q$ is allowed to have characteristic 2. In this case, Frobenius acts on Weierstrass points as it does on the {\'e}tale part of $J[2]$. Since we require odd characteristic throughout the rest of the paper, notably in Theorem~\ref{thm:asymptotic}, we do not pursue this further.
\end{remark}

\begin{definition}
We call any polynomial of the form~\eqref{eq:2torsion} an \defi{admissible Weil polynomial modulo 2}.
Note that the notion of admissibility is independent of $q$.
\end{definition}

By applying Proposition~\ref{prop:2torsion} to all possible partitions of $8$ we obtain our main theorem.
\begin{theorem}\label{main-thm-g3}
Let $q$ be an odd prime power.
The isogeny classes of three-dimensional abelian varieties corresponding to Weil polynomials of the form
\[
t^6 + a_1 t^5 + a_2 t^4 + a_3 t^3 + q a_2 t^2 + q^2 a_1 t + q^3
\]
with  $a_2\equiv 0 \pmod 2$ and $a_3\equiv 1 \pmod 2$ do not contain the Jacobian of a hyperelliptic curve over $\F_q$.
\end{theorem}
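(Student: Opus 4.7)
The plan is to apply Proposition~\ref{prop:2torsion} directly with $g=3$ and enumerate partitions of $2g+2=8$. First I would translate the proposition into a statement about the Weil polynomial $Z_A(t)$: since the polynomial in~\eqref{eq:2torsion} equals $t^{2g}Z_A(1/t)$ and $q$ is odd, reducing mod $2$ transforms the conclusion into
\[
1 + a_1 t + a_2 t^2 + a_3 t^3 + a_2 t^4 + a_1 t^5 + t^6 \equiv R_d(t) \pmod 2,
\]
where $R_d(t) \coloneqq \prod_{i=1}^r (t^{d_i}-1)/(t-1)^2$ for some partition $d = (d_1,\dots,d_r)$ of $8$. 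Thus it suffices to show that no such $R_d$ produces the residue pair $(a_2,a_3) \equiv (0,1) \pmod 2$.

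Although there are $22$ partitions of $8$, the enumeration can be shortened: in $\F_2[t]$ the identity $t^{2n}-1 = (t^n-1)^2$ lets one replace any even part $d_i = 2^{k}m$ (with $m$ odd) by $2^k$ copies of $m$ without changing $R_d \bmod 2$. So $R_d \bmod 2$ depends only on the underlying partition of $8$ into odd parts, of which there are $Q(8)=6$ by Euler's theorem; equivalently, the six residues are indexed by the distinct-part partitions $(8)$, $(7,1)$, $(6,2)$, $(5,3)$, $(5,2,1)$, $(4,3,1)$.

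The final step is to compute $R_d \bmod 2$ for representatives of these six classes and read off the resulting pairs $(a_2,a_3) \bmod 2$. I expect them to be $(1,0)$, $(1,1)$, $(0,0)$, $(1,1)$, $(0,0)$, $(1,0)$ respectively (in particular, three palindromic shapes arise in the outer coefficients, doubled by parity of $a_1$), so that $(0,1)$ never occurs and the theorem follows. The only potential obstacle is careful bookkeeping of six polynomial multiplications in $\F_2[t]$; no further conceptual ingredient is needed beyond Proposition~\ref{prop:2torsion}.
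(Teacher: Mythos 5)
Your proposal is correct and follows essentially the same route as the paper: apply Proposition~\ref{prop:2torsion} with $g=3$ and check by finite enumeration that the residue pair $(a_2,a_3)\equiv(0,1)\pmod 2$ never arises (your six computed triples match the paper's Table~\ref{tab:partitions_genus_3} exactly, and your explicit handling of the reversal $t^{2g}Z_A(1/t)$ is a welcome clarification the paper glosses over). Your reduction from all $22$ partitions to the $6$ classes indexed by odd-part (equivalently distinct-part) partitions is a minor streamlining that the paper itself carries out only afterwards, in Proposition~\ref{prop:countingpolys}.
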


\begin{proof}
  For each of the twenty-two partitions $\{d_i\}$ of $2g+2 = 8$, we compute the corresponding polynomial $(t-1)^{-2} \prod_{i=1}^r (t^{d_i}-1)$.
By Proposition~\ref{prop:2torsion}, these are all of the admissible Weil polynomials modulo $2$ for Jacobians of hyperelliptic curves of genus three.
In Table~\ref{tab:partitions_genus_3}, we tabulate the coefficients $(a_1,a_2,a_3)$ of the resulting Weil polynomials modulo $2$ with the corresponding partitions $\{d_i\}$.
\begin{table}[!ht]
\addtocounter{equation}{1}
\begin{tabular}{ c | l }
  Coefficients $(a_1,a_2,a_3) \pmod{2}$ & Partition of 8\\
\hline
$(0, 1, 1)$ & $\{3, 5\}$ \\
$(1, 1, 0)$ & $\{1, 1, 1, 1, 1, 3\}$, $\{1, 1, 1, 2, 3\}$, $\{1, 2, 2, 3\}$, $\{1, 3, 4\}$ \\
$(1, 0, 0)$ & $\{1, 1, 1, 5\}$, $\{1, 2, 5\}$ \\
$(0, 0, 0)$ & $\{1, 1, 3, 3\}$, $\{1, 1, 6\}$, $\{2, 3, 3\}$, $\{2, 6\}$ \\
$(0, 1, 0)$ & $\{1, 1, 1, 1, 1, 1, 1, 1\}$, $\{1, 1, 1, 1, 1, 1, 2\}$, $\{1, 1, 1, 1, 2, 2\}$, \\
& $\{1, 1, 1, 1, 4\}$, $\{1, 1, 2, 2, 2\}$, $\{1, 1, 2, 4\}$,\\
&  $\{2, 2, 2, 2\}$, $\{2, 2, 4\}$, $\{4, 4\}$, $\{8\}$ \\
$(1, 1, 1)$ & $\{1, 7\}$
\end{tabular}
\caption{Weil coefficients modulo 2 and corresponding partitions for threefolds.}
\label{tab:partitions_genus_3}
\end{table}
\end{proof}

Although Theorem~\ref{main-thm-g3} only considers the case of three-dimensional abelian varieties, Proposition~\ref{prop:2torsion} applies much more generally.
Indeed, for any $g \geq 1$, we can produce a list of admissible Weil polynomials modulo 2 for Jacobians of hyperelliptic curves of genus~$g$, independent of $q$.
The following result counts the number of admissible polynomials in terms of partitions.

\begin{proposition}\label{prop:countingpolys}
The number of admissible Weil polynomials modulo 2 for Jacobians of hyperelliptic curves of genus $g$ over finite fields of odd characteristic is equal to $Q(2g + 2)$, the number of partitions  of $2g + 2$ into distinct parts, or equivalently, the number of partitions of $2g+2$ into odd parts.
 \end{proposition}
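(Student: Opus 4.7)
The plan is to establish a bijection between admissible Weil polynomials modulo $2$ and partitions of $2g+2$ into odd parts; the equivalence with partitions into distinct parts will then follow from the classical theorem of Euler.

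The starting point is the Frobenius identity in $\F_2[t]$: $t^{2n}-1 = (t^n-1)^2$, which iterates to $t^{2^k m}-1 = (t^m-1)^{2^k}$ for $m$ odd. I would use this to rewrite, for any partition $\lambda = (d_1,\ldots,d_r)$ of $2g+2$, the product $\prod_{i=1}^{r}(t^{d_i}-1)$ in the form $\prod_{m \text{ odd}}(t^m-1)^{e_m(\lambda)}$, where $e_m(\lambda) := \sum_{i:\, m_i = m} 2^{k_i}$ and each $d_i = 2^{k_i} m_i$ with $m_i$ odd. Since $\sum_m m\, e_m(\lambda) = \sum_i d_i = 2g+2$, the tuple $(e_m)$ itself encodes a partition of $2g+2$ into odd parts (namely, $e_m$ copies of $m$), and every admissible polynomial is realized by such a partition (obtained from $\lambda$ by taking all $k_i = 0$). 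This reduces the problem to counting tuples $(e_m)_{m \text{ odd}}$ of nonnegative integers with $\sum_m m\, e_m = 2g+2$.

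The remaining task, and the main obstacle, is to show that distinct tuples yield distinct polynomials. The key fact is that $t^m - 1$ is squarefree in $\F_2[t]$ whenever $m$ is odd, since its derivative $m t^{m-1}$ is coprime to it. It follows that for any primitive $n$-th root of unity $\zeta \in \overline{\F}_2$ with $n$ odd, the multiplicity of $\zeta$ in $\prod_m (t^m-1)^{e_m}$ equals $s_n := \sum_{m \text{ odd},\, n \mid m} e_m$, and Möbius inversion over the divisibility poset of odd positive integers recovers $(e_m)$ from $(s_m)$. The final division by $(t-1)^2$ merely subtracts $2$ from the $(t-1)$-multiplicity and is well-defined because $\sum_m e_m$ is even (as $\sum_m m e_m = 2g+2$ is even and each odd $m$ contributes the parity of $e_m$) and strictly positive, hence at least $2$. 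Combining these steps yields the bijection, and Euler's theorem on partitions completes the proof.
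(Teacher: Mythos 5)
Your proposal is correct, and it diverges from the paper's proof in an interesting way. Both arguments share the same germ --- the Frobenius identity $t^{2n}-1 \equiv (t^n-1)^2 \pmod 2$, used to replace an arbitrary partition by a canonical representative with the same product --- but you normalize in the opposite direction: the paper repeatedly \emph{merges} two equal parts $d,d$ into $2d$ to reach the unique equivalent partition into \emph{distinct} parts, whereas you \emph{split} each part $2^k m$ into $2^k$ copies of its odd part $m$ to reach a partition into \emph{odd} parts. The injectivity step is also genuinely different: the paper argues by induction, reading off the smallest part as the degree of the lowest non-constant monomial of $\prod_i(t^{d_i}-1)$ and stripping it away, while you exhibit a complete invariant --- the multiplicities $s_n = \sum_{n \mid m} e_m$ of primitive $n$-th roots of unity in $\overline{\F}_2$, recoverable because $t^m-1$ is squarefree for odd $m$, with M\"obius inversion returning the exponents $e_m$. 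The paper's route is more elementary and self-contained; yours is slightly heavier but more conceptual, and it has the side benefit of making explicit that the division by $(t-1)^2$ is well-defined and loses no information (since $\sum_m e_m$ is even and positive), a point the paper passes over silently. Both correctly land on partitions counted by $Q(2g+2)$ via Euler's theorem.
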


\begin{proof}
It is well-known that the number of partitions of $2g+2$ into distinct parts is equal to the number of partitions of $2g + 2$ into odd parts; see \cite[Theorem 10.2]{nzm}.  For the remainder of this proof, our perspective will focus on partitions into distinct parts.

 Proposition~\ref{prop:2torsion} shows how to compute admissible Weil polynomials modulo 2 using partitions, although many different partitions can correspond to a single admissible polynomial.
 For the purposes of this proof, we will call two partitions $\{d_1, \dots, d_r\}$ and $\{e_1,\dots, e_s\}$ of an integer $n$ \defi{equivalent} if
 \begin{equation*}
 	\prod_{i = 1}^r \bigl(t^{d_i} - 1\bigr) \equiv \prod_{j = 1}^s \bigl(t^{e_j} - 1\bigr) \pmod 2.
\end{equation*}
 Using Equation~\eqref{eq:2torsion}, it suffices to prove that every equivalence class of partitions of $2g + 2$ contains precisely one partition with distinct parts.

Observe that every partition of $2g + 2$ is equivalent to a partition with distinct parts.
Indeed, if $\{d_1, \dots ,d_{r+1}\}$ is any partition of $2g + 2$ where $d_{r} = d_{r + 1}$, then we can construct another partition  $\{d_1, \dots , d_{r-1}, d'_r\}$ with $d'_{r} = 2d_r$.
These two partitions are equivalent since $t^{2d_r} -1 \equiv (t^{d_r} - 1)^2 \mod 2$.
Thus by induction on the number of equal parts, we conclude that every admissible Weil polynomial modulo 2 arises as a partition of $2g + 2$ into distinct parts.

Now suppose that $\{d_1, \dots, d_r\}$ and $\{e_1, \dots, e_s\}$ are two equivalent partitions of $2g +2$ into distinct parts.
Without loss of generality, we order the parts so that $d_1$ and $e_1$ are the smallest parts of their respective partitions.
After expanding the polynomials above, we see that the non-constant monomials of smallest degree are $t^{d_1}$ and $t^{e_1}$, respectively.
This implies that $d_1 = e_1$, hence $\{d_2, \dots, d_r\}$ and $\{e_2, \dots, e_s\}$ are equivalent partitions of $2g +2 - d_1$.
Thus by induction on $r$, we conclude that two equivalent partitions of $2g + 2$ into distinct parts are equal.
\end{proof}

\begin{remark}
  In contrast with Remark~\ref{rem:smallqpartititions}, every admissible Weil polynomial mod 2 arises as the reduction mod 2 of a Weil polynomial for a hyperelliptic curve over $\F_q$, for each $q$:
Since there is at least one irreducible polynomial of every degree, we
  can construct a hyperelliptic curve ramified at points of degree $d_i$, where $\{d_i\}$ is the partition of $2g +2$ into distinct parts.
\end{remark}

In Table~\ref{tab:partition_table}, we tabulate the number of admissible and inadmissible Weil polynomials modulo $2$ for Jacobians of hyperelliptic curves of genus $g\leq 7$, i.e., the numbers $Q(2g +2)$ and $2^g - Q(2g +2)$.

\begin{table}[!ht]
\addtocounter{equation}{1}
\begin{tabular}{r|ccccccc}
$g$ & 1 & 2 & 3 & 4 & 5 & 6 & 7 \\
\hline
\multirow{2}{*}{$Q(2g + 2)$} & 2 & 4 & 6 & 10 & 15 & 22 & 32 \\
 & 100.00\% & 100.00\% & 75.00\% & 62.50\% & 46.88\% & 34.38\% & 25.00\% \\
\hline
\multirow{2}{*}{$2^g - Q(2g + 2)$} & 0 & 0 & 2 & 6 & 17 & 42 & 96 \\
 & 0.00\% & 0.00\% & 25.00\% & 37.50\% & 53.12\% & 65.62\% & 75.00\% \\
\end{tabular}
%
%

\caption{The number of admissible  and inadmissible Weil polynomials modulo $2$ for hyperelliptic curves of small genus.}
\label{tab:partition_table}
\end{table}

\section{Asymptotics}\label{sec:asymptotics}

The results in the previous section show that an isogeny class cannot contain a hyperelliptic Jacobian if the coefficients of the corresponding Weil polynomial satisfy a certain parity condition. Therefore, it is natural to ask how many Weil polynomials satisfy any given parity condition, or more generally, how many Weil polynomials are congruent to a given fixed polynomial of the correct form modulo an integer $m$.  The following theorem answers this question asymptotically for abelian varieties of fixed dimension.

\begin{theorem}\label{thm:asymptotic}
  Let $g\geq 2$ and $m$ be fixed integers, and let $f(t) \in \Z[t]$ be a fixed polynomial of the form
  \begin{equation*}
    f(t) = t^{2g} + a_1 t^{2g-1} +
    \cdots
    + a_{g-1} t^{g + 1} + a_g t^g + a_{g-1} q t^{g-1} +
    \cdots
    + a_1 q^{g-1}t
    + q^g.
  \end{equation*}
  For a prime power $q$ coprime to $m$, write $e_{f,m}(q,g)$ for the proportion of isogeny classes of $g$-dimensional abelian varieties over $\F_q$ whose Weil polynomial is congruent to $f(x)$ modulo~$m$.
  Then
  \begin{equation*}
    \lim_{q\to \infty} e_{f,m}(q,g) = \frac{1}{m^g},
  \end{equation*}
  where the limit is taken over all prime powers $q$ coprime to $m$.
\end{theorem}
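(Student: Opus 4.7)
The plan is to reinterpret $e_{f,m}(q,g)$ as the ratio of two lattice-point counts in a fixed bounded region of $\R^g$, and then apply a quantitative asymptotic in the style of DiPippo--Howe. By Tate's theorem, the $\F_q$-isogeny class of a $g$-dimensional abelian variety is determined by its Weil polynomial, so (up to a lower-order correction coming from the $p$-adic side of Honda--Tate) counting isogeny classes reduces to counting integer tuples $(a_1,\ldots,a_g)\in\Z^g$ lying in the Weil region
\[
V_q \colonequals \bigl\{(a_1,\ldots,a_g) \in \R^g : t^{2g}+a_1 t^{2g-1}+\cdots+a_1 q^{g-1}t+q^g \text{ has all complex roots on } |z|=\sqrt{q} \bigr\}.
\]

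After the rescaling $a_i = q^{i/2} b_i$, the region $V_q$ becomes a fixed compact semi-algebraic subset $V \subset \R^g$ of positive finite volume, independent of $q$, and the integer lattice $\Z^g$ becomes the rescaled lattice $L_q = q^{-1/2}\Z \times q^{-1}\Z \times \cdots \times q^{-g/2}\Z$ of covolume $q^{-g(g+1)/4}$. Standard lattice-point counting in a bounded region with piecewise-smooth boundary then yields
\[
|\Z^g \cap V_q| = \mathrm{vol}(V)\,q^{g(g+1)/4} + O\bigl(q^{g(g+1)/4 - 1/2}\bigr),
\]
which recovers the DiPippo--Howe asymptotic for the total number of isogeny classes. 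Let $(c_1,\ldots,c_g) \in \{0,1,\ldots,m-1\}^g$ be the tuple of coefficients of $f$ reduced modulo $m$. The same argument applies verbatim with $\Z^g$ replaced by the coset $(c_1,\ldots,c_g) + m\Z^g$, which is a translate of a sublattice of index $m^g$; the boundary error remains of the same order, giving
\[
\bigl|\bigl((c_1,\ldots,c_g) + m\Z^g\bigr) \cap V_q\bigr| = \frac{\mathrm{vol}(V)}{m^g}\,q^{g(g+1)/4} + o\bigl(q^{g(g+1)/4}\bigr).
\]
Dividing this by the total count yields the claimed limit $1/m^g$.

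The main obstacle is uniform control of two error terms as $q \to \infty$. First, one must know that $\partial V$ has finite $(g-1)$-dimensional Minkowski content, so that the boundary-lattice-point error is $O(q^{g(g+1)/4 - 1/2})$ and hence negligible compared to the main term; this follows from the explicit semi-algebraic description of $V$ via the Sturm-like conditions characterizing polynomials with all roots on a circle, and is exactly what DiPippo and Howe establish. Second, the discrepancy between lattice points in $V_q$ and actual Weil polynomials of $g$-dimensional abelian varieties over $\F_q$ (arising from the $p$-adic Honda--Tate conditions) must itself be shown to be $o(q^{g(g+1)/4})$, and the hypothesis $\gcd(q,m) = 1$ is used precisely to ensure that the congruence condition modulo $m$ and the $p$-adic conditions at $p \mid q$ decouple, so that the same error bounds apply uniformly to each coset modulo $m$.
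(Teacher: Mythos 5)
Your proposal is correct and follows essentially the same route as the paper, which simply cites the lattice-point bounds of DiPippo--Howe for $I(q,g)$ and of Holden for $I_{f,m}(q,g)$ (the rescaled Weil region, the coset of index $m^g$, the $O(q^{g(g+1)/4-1/2})$ boundary error, and the decoupling of the mod-$m$ and mod-$p$ conditions via $\gcd(q,m)=1$) and then passes to the limit of the ratio. The only imprecision is that the ordinariness condition $p \nmid a_g$ contributes the main-term factor $r(q)=\varphi(q)/q$ rather than an $o(q^{g(g+1)/4})$ correction, but this factor appears in both numerator and denominator and cancels, so your conclusion stands.
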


\begin{proof}
  This is essentially a theorem of Holden~\cite[Theorem~5]{holden}, although we present a slightly more general statement here.
  Specifically, Holden's theorem only considers the case when $m =\ell$ is a prime, and the limit is taken over $q = p^r$ for a single fixed prime $p$.
  However, the more general version of the theorem presented above is obtained immediately from Holden's methods, as follows.
  Denote by $I(q,g)$ the number of isogeny classes of $g$-dimensional abelian varieties over $\F_q$ and write $I_{f,m}(q,g)$ for the number of such isogeny classes whose Weil polynomial is congruent to $f(x)$ modulo $m$.
  With this notation we may write $e_{f,m}(q,g) = \frac{I_{f,m}(q,g)}{I(q,g)}$.
  Using lattices, DiPippo and Howe obtained upper and lower bounds for $I(q,g)$; see~\cite[Theorem 1.2]{dh} and~\cite{dh-correction}.
  Holden employed analogous techniques to obtain similar bounds for $I_{f,m}(q,g)$; see~\cite[Proposition 2.2]{holden}.
  Using these bounds, as in the proof of~\cite[Theorem~5]{holden}, we find
  \begin{equation*}
    \begin{aligned}
&
\frac{v_gr(q) q^{g(g+1)/4} m^{-g} - 2c(g,m) q^{g(g+1)/4 - 1/2} m^{1-g}}
{v_gr(q) q^{g(g+1)/4} +(v_g + 3c(g,1)) q^{g(g+1)/4 - 1/2}}\\
&
\hspace{2cm}
\leq \frac{I_{f,m}(q,g)}{I(q,g)}\\
&\hspace{2cm}
\leq \frac{v_gr(q) q^{g(g+1)/4} m^{-g} +(v_g + 3c(g,1)) q^{g(g+1)/4 - 1/2} m^{1-g}}
{v_gr(q) q^{g(g+1)/4}  - 2c(g,m)q^{g(g+1)/4 - 1/2}},
    \end{aligned}
  \end{equation*}
  where $v_g$ is a constant depending $g$, and $c(g,m)$ is a constant depending on both $g$ and $m$, and $r(q) = \varphi(q)/q$, where $\varphi$ denotes Euler totient function.
  Letting $q\to \infty$, the theorem follows.
\end{proof}

Recall that, in light of Theorem~\ref{main-thm-g3} and Proposition~\ref{prop:countingpolys}, we say that the Weil polynomial of an isogeny class of $g$-dimensional abelian varieties is admissible if it is congruent modulo 2 to a polynomial of the form $\prod_{i = 1}^r (t^{d_i} - 1)/(t^2 - 1)\in \F_2[t]$ for some partition $2g + 2 = d_1 + \dots + d_r$, and is inadmissible otherwise. By combining Theorems~\ref{main-thm-g3} and~\ref{thm:asymptotic}, we obtain the following corollary.

\begin{corollary}\label{cor:hq}
Denote by $c(q)$ the proportion of isogeny classes of abelian threefolds over $\F_q$ with admissible Weil polynomials.
Then
\begin{equation*}
	\lim_{q\to\infty} c(q) =75\%,
\end{equation*}
where the limit is taken over all odd prime powers $q$.
 \end{corollary}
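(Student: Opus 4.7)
The plan is to reduce the corollary to an elementary counting argument that combines the two main results established earlier: Proposition~\ref{prop:countingpolys} with $g=3$, and Theorem~\ref{thm:asymptotic} with $m=2$ and $g=3$. The key observation is that admissibility depends only on the residue class of the Weil polynomial modulo $2$, so the proportion $c(q)$ splits as a finite sum of the quantities $e_{f,2}(q,3)$ appearing in Theorem~\ref{thm:asymptotic}, one for each admissible residue class.

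First, I would enumerate the admissible residue classes. By Proposition~\ref{prop:countingpolys}, the number of admissible Weil polynomials modulo $2$ for hyperelliptic Jacobians of genus $3$ is exactly $Q(2 \cdot 3 + 2) = Q(8) = 6$. In fact these six residue classes in the variables $(a_1,a_2,a_3) \pmod 2$ are already catalogued explicitly in Table~\ref{tab:partitions_genus_3}, namely $(0,1,1)$, $(1,1,0)$, $(1,0,0)$, $(0,0,0)$, $(0,1,0)$, and $(1,1,1)$. Notice these six are precisely the complement of the two inadmissible patterns $(a_2,a_3) \equiv (0,1) \pmod 2$ flagged in Theorem~\ref{main-thm-g3}, which constitute $2$ classes out of the $2^3 = 8$ possibilities.

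Next, for each admissible residue class I would choose an integer lift $f(t) \in \Z[t]$ of the symmetric form required by Theorem~\ref{thm:asymptotic}, and apply that theorem with $m=2$, $g=3$ to conclude that $e_{f,2}(q,3) \to 1/2^3 = 1/8$ as $q \to \infty$ over prime powers coprime to $2$, i.e., odd prime powers. Since the six admissible residue classes are pairwise disjoint and exhaust the admissible Weil polynomials, we have
\[
c(q) \;=\; \sum_{f \text{ admissible}} e_{f,2}(q,3),
\]
a sum of six terms. Taking the limit termwise (permissible because the sum is finite) gives
\[
\lim_{q \to \infty} c(q) \;=\; 6 \cdot \frac{1}{8} \;=\; \frac{3}{4} \;=\; 75\%.
\]

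There is no substantive obstacle in the argument; all the analytic work has already been absorbed into Theorem~\ref{thm:asymptotic} (via the DiPippo--Howe and Holden lattice-point estimates), and the combinatorial content is supplied by Proposition~\ref{prop:countingpolys}. The corollary is therefore essentially a bookkeeping corollary of these two inputs.
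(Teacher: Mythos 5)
Your argument is correct and matches the paper's own derivation: the paper obtains Corollary~\ref{cor:hq} (and its generalization, Theorem~\ref{thm:asymptotic_hyper}) precisely by counting the $Q(8)=6$ admissible residue classes from Proposition~\ref{prop:countingpolys} and applying Theorem~\ref{thm:asymptotic} with $m=2$ to each, giving $6/2^3 = 75\%$. No substantive differences.
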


 More generally, by combining Theorem~\ref{main-thm-g3} and Proposition~\ref{prop:countingpolys}, we can determine the proportion of isogeny classes of $g$-dimensional abelian varieties over $\F_q$ with admissible Weil polynomials in terms of the number of partitions of $2g + 2$ into distinct parts.

\begin{theorem}\label{thm:asymptotic_hyper}
Let $c(q,g)$ be the proportion of isogeny classes of $g$-dimensional abelian varieties over $\F_q$ with admissible Weil polynomial.
For all $g \geq 2$ we have
\[
	\lim_{q\to\infty} c(q,g) = \frac{Q(2g +2)}{2^g},
\]
as $q$ ranges over odd prime powers, where $Q(2g+2)$ is the number of partitions of $2g + 2$ into distinct parts.
\end{theorem}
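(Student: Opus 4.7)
The plan is to combine Proposition~\ref{prop:countingpolys} with Theorem~\ref{thm:asymptotic} taken at $m=2$, and show that the asymptotic density is simply the number of admissible mod-$2$ classes times the density of a single class.

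First, I would verify that each admissible polynomial modulo $2$ corresponds to a legitimate mod-$2$ reduction of a Weil polynomial of the required form. Since $q$ is odd, reducing the Weil polynomial of a $g$-dimensional abelian variety modulo $2$ yields a palindromic polynomial in $\F_2[t]$ of degree $2g$ with leading and constant coefficients both equal to $1$; these are parametrized by the $g$ coefficients $a_1,\dots,a_g \pmod 2$, giving exactly $2^g$ possible classes. I would then check that each admissible polynomial from Proposition~\ref{prop:countingpolys} is itself palindromic mod $2$: for each $i$, the factor $t^{d_i}-1$ is palindromic mod $2$, palindromicity is preserved under products, and dividing by the palindromic polynomial $(t-1)^2 \equiv t^2+1 \pmod 2$ preserves palindromicity. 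Thus each admissible mod-$2$ polynomial coincides with the reduction of some integer polynomial of Weil form.

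Second, I would invoke Proposition~\ref{prop:countingpolys} to conclude that exactly $Q(2g+2)$ of the $2^g$ palindromic classes in $\F_2[t]$ are admissible. For each admissible class, pick an integer lift $f_j(t) \in \Z[t]$ of Weil form, for $j = 1, \dots, Q(2g+2)$. Applying Theorem~\ref{thm:asymptotic} with $m = 2$ and the polynomial $f_j$, we obtain
\begin{equation*}
\lim_{q\to\infty} e_{f_j,2}(q,g) = \frac{1}{2^g},
\end{equation*}
where $q$ ranges over odd prime powers.

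Finally, since the admissible classes are pairwise disjoint mod-$2$ residue classes, the proportion of isogeny classes with admissible Weil polynomial is the sum
\begin{equation*}
c(q,g) = \sum_{j=1}^{Q(2g+2)} e_{f_j,2}(q,g),
\end{equation*}
and passing to the limit gives $Q(2g+2)/2^g$ as claimed.

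There is no serious obstacle here: the argument is essentially bookkeeping once the two earlier results are in hand. The only point requiring any care is the palindromicity verification in the first step, which ensures that Theorem~\ref{thm:asymptotic} is applicable to each admissible mod-$2$ class (i.e., that each such class is non-empty as a set of integer Weil polynomials).
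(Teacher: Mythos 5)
Your proposal is correct and follows essentially the same route as the paper: apply Theorem~\ref{thm:asymptotic} with $m=2$ to each of the $Q(2g+2)$ admissible residue classes supplied by Proposition~\ref{prop:countingpolys} and sum the resulting densities. The only difference is that you make explicit the palindromicity check guaranteeing each admissible class is realized by an integer polynomial of Weil form, a point the paper leaves implicit when it says the result ``follows immediately.''
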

 \begin{proof}
By Proposition~\ref{prop:countingpolys}, we know there is a set of polynomials $S = \{f_1, \dots, f_{Q(2g+2)}\}$ such that the Weil polynomial of the Jacobian of a hyperelliptic curve of genus $g$ over any finite field of odd characteristic is equivalent modulo 2 to some polynomial in $S$. Thus, the result follows immediately from Theorem~\ref{thm:asymptotic}.
 \end{proof}

 \begin{remark}
 \label{rem:disc}
For fixed $g\geq 2$, the number of isogeny classes of $g$-dimensional abelian varieties over $\F_q$ and the number of hyperelliptic curves of genus $g$ over $\F_q$ are asymptotically bounded by $q^{g(g+1)/4}$ and $q^{2g-1}$ , respectively, as $q\to\infty$ ranges over odd prime powers; see  \cite[Theorem 1.1]{dh}, and \cite[Table 1, Corollary 3.4]{nart}.
For $g > 6$, this means that asymptotically $0\%$ of the isogeny classes of $g$-dimensional abelian varieties over $\F_q$ contain a hyperelliptic Jacobian as $q\to\infty$ ranges over odd prime powers.
Comparing this reality to Theorem~\ref{thm:asymptotic_hyper}, we see that the proportion of isogeny classes of $g$-dimensional abelian varieties that have an admissible Weil polynomial but still do not contain a hyperelliptic Jacobian is $\frac{Q(2g +2)}{2^g}$ as ${q\to\infty}$.
Notice that this discrepancy gets smaller as $g$ grows. Indeed, $Q(N) \sim \frac{3^{3/4}}{12N^{3/4}}\exp(\pi\sqrt{N/3})$ as $N \to \infty$ \cite[Figure 1.9]{fs}, so it follows that $\frac{Q(2g +2)}{2^g}\to 0$ as $g \to \infty$.
 \end{remark}

 \section{Point counts}\label{sec:pointcounts}

In Section~\ref{sec:weilpolymod2} we determined restrictions modulo $2$ for Weil polynomials of hyperelliptic Jacobians.
These Weil polynomials govern the point counts of the corresponding hyperelliptic curve $C/\F_{q}$ over all extensions of $\F_q$.
In this section, we determine $2$-adic restrictions on the point counts of hyperelliptic curves over extensions of $\F_q$ by more elementary means. Rather than studying the action of Frobenius on $J[2]$, we study its action on the Weierstrass points of the curve directly.
In particular, this provides an alternative proof of Theorem~\ref{main-thm-g3}.

\subsection{Restrictions on parity of point counts}

As before, let $q$ be an odd prime power, $C/\F_q$ be a hyperelliptic curve of genus $g > 1$.
Recall that we denote the support of the ramification divisor by $W$ and we denote by $\{d_i\}$ a partition of $2g + 2$, corresponding to the decomposition of $W$ into Frobenius orbits.
We begin with the following observations on the point counts of $C$ over extensions of $\F_q$.

\begin{lemma}\label{lem:W_count}
For each $n \geq 1$, we have $\#W(\F_{q^n}) = \sum_{i:  d_i | n} d_i$.
\end{lemma}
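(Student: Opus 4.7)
The plan is to exploit the decomposition of $W$ into Frobenius orbits directly, together with the standard characterization of $\F_{q^n}$-points as fixed points of $\Frob^n$.

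First I would recall the setup: by definition of the $d_i$, the geometric Weierstrass points $W \subseteq C(\overline{\F}_q)$ decompose as a disjoint union $W = O_1 \sqcup \cdots \sqcup O_r$ of $\Frob$-orbits, with $|O_i| = d_i$. Since $W \subseteq C(\overline{\F}_q)$, we have $W(\F_{q^n}) = W \cap C(\F_{q^n}) = \{P \in W : \Frob^n(P) = P\}$.

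Next I would analyze orbit by orbit. Fix an orbit $O_i$ of size $d_i$, and pick any $P \in O_i$. The stabilizer of $P$ under the cyclic action of $\langle \Frob\rangle$ is generated by $\Frob^{d_i}$, so $\Frob^n(P) = P$ if and only if $d_i \mid n$. Moreover, this condition is independent of the choice of $P \in O_i$: either every point of $O_i$ is fixed by $\Frob^n$ (when $d_i \mid n$), contributing $d_i$ points to $W(\F_{q^n})$, or none is, contributing $0$.

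Summing over all orbits yields
\[
\#W(\F_{q^n}) = \sum_{i=1}^{r} |O_i \cap C(\F_{q^n})| = \sum_{i :\, d_i \mid n} d_i,
\]
which is the desired formula. There is no real obstacle here; the only thing to be careful about is that the $d_i$ are genuinely the orbit sizes (equivalently, the degrees of the closed points of $C$ supported in the ramification divisor), which is exactly how the partition $\{d_i\}$ was defined in Section~\ref{sec:weilpolymod2}.
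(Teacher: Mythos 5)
Your argument is correct and is essentially the paper's proof, which simply notes that a point of degree $d$ contributes $d$ points over $\F_{q^n}$ if $d \mid n$ and none otherwise; you have just made explicit the orbit decomposition and the stabilizer computation behind that one-line observation. No gaps.
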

\begin{proof}
A point of degree $d$ contributes $d$ $\F_{q^n}$-points if $d|n$, and none otherwise.
\end{proof}

\begin{lemma}\label{lem:orbits}
  For each $n \geq 1$, we have $\#C(\F_{q^n}) \equiv \#W(\F_{q^n}) \pmod{2}$.
\end{lemma}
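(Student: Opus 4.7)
The plan is to exploit the hyperelliptic double cover $\pi\colon C \to \P^1$ and partition the $\F_{q^n}$-points of $C$ according to their image under $\pi$. Since $\pi$ has degree two, for each $P \in \P^1(\F_{q^n})$ the fiber $\pi^{-1}(P)$ has geometric size two, and the number of $\F_{q^n}$-rational points it contains is $0$, $1$, or $2$. The key observation is that it equals $1$ precisely when $P$ is a branch point of $\pi$, in which case the unique geometric preimage is a Weierstrass point defined over the same field as $P$.

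The main step is therefore to identify the set $B \subset \P^1$ of branch points and check that the bijection $\pi\colon W \to B$ restricts to a bijection $W(\F_{q^n}) \to B(\F_{q^n})$ for every $n$. This is immediate: each branch point has a unique geometric preimage, so it is Frobenius-stable if and only if its preimage is, giving $\#W(\F_{q^n}) = \#B(\F_{q^n})$.

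Granting this, I would write
\[
\#C(\F_{q^n}) \;=\; \#B(\F_{q^n}) \;+\; 2\cdot\#\bigl\{P \in \P^1(\F_{q^n}) \setminus B : \pi^{-1}(P) \subset C(\F_{q^n})\bigr\},
\]
where the second summand is even by construction (unramified rational fibers contribute either $0$ or $2$ rational points). Reducing modulo $2$ gives $\#C(\F_{q^n}) \equiv \#B(\F_{q^n}) = \#W(\F_{q^n}) \pmod{2}$, which is the claim.

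The only subtlety worth handling explicitly is the point(s) of $C$ at infinity when $C$ is presented as $y^2 = f(x)$: if $\deg f = 2g+1$, infinity is a Weierstrass point (so lies in $W$), while if $\deg f = 2g+2$, the fiber over $\infty \in \P^1$ is either empty or a pair of unramified rational points, depending on whether the leading coefficient of $f$ is a square in $\F_{q^n}$. Either way the contribution of infinity fits into the same dichotomy (branch point contributing one, or split/inert fiber contributing an even number), so no separate argument is needed. I do not expect any real obstacle here; the only thing one might stumble on is keeping the role of $\infty$ straight between the two normal forms, and this is dispatched by the remark above.
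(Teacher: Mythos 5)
Your proof is correct and is essentially the paper's argument: the paper phrases it as ``non-Weierstrass rational points are paired up by the hyperelliptic involution,'' which is the same decomposition as your fiber-by-fiber count over $\P^1(\F_{q^n})$, since the involution's orbits are exactly the fibers of $\pi$. Your handling of the fiber at infinity and of the bijection $W(\F_{q^n})\to B(\F_{q^n})$ is sound, just more explicit than the paper's one-line version.
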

\begin{proof}
The $\F_{q^n}$-points of $W$ are precisely the $\F_{q^n}$-points of $C$ that are fixed by the hyperelliptic involution.  Since all other points appear in pairs, it follows that $\#C(\F_{q^n}) \equiv \#W(\F_{q^n}) \pmod 2$.
\end{proof}

\begin{corollary}
 For each $n \geq 1$, we have $\#C(\F_{q^n}) \equiv \#C(\F_{q^{2n}}) \pmod{2}$.
\end{corollary}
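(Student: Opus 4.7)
The plan is to reduce the claim to a statement about Weierstrass points using Lemma~\ref{lem:orbits}. That lemma gives $\#C(\F_{q^n}) \equiv \#W(\F_{q^n}) \pmod{2}$ and $\#C(\F_{q^{2n}}) \equiv \#W(\F_{q^{2n}}) \pmod{2}$, so it suffices to prove
\[
\#W(\F_{q^n}) \equiv \#W(\F_{q^{2n}}) \pmod{2}.
\]

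To handle this, I would apply Lemma~\ref{lem:W_count} to write
\[
\#W(\F_{q^{2n}}) - \#W(\F_{q^n}) = \sum_{i \,:\, d_i \mid 2n,\; d_i \nmid n} d_i.
\]
The key observation is that every index $i$ contributing to this sum has $d_i$ even. Indeed, writing $n = 2^b k$ with $k$ odd, we have $2n = 2^{b+1}k$, so any divisor of $2n$ which does not divide $n$ must have $2$-adic valuation exactly $b+1 \geq 1$. Hence each summand on the right-hand side is even, the total difference is even, and the desired congruence follows.

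Alternatively, one can argue Galois-theoretically: the set $W(\F_{q^{2n}}) \smallsetminus W(\F_{q^n})$ is stable under $\Gal{\F_{q^{2n}}}{\F_{q^n}}$, and no point in this set can be fixed by the nontrivial element of this order-two group (otherwise it would already lie in $W(\F_{q^n})$), so every remaining orbit has size $2$ and the set has even cardinality. I do not anticipate a serious obstacle here; the only subtle point is the parity observation distinguishing $2n$ from $n$, which the two formulations express in complementary ways.
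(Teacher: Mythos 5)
Your proposal is correct and matches the paper's argument: the paper likewise combines Lemma~\ref{lem:orbits} with Lemma~\ref{lem:W_count} and observes that $n$ and $2n$ have the same odd divisors, which is exactly your point that any $d_i$ dividing $2n$ but not $n$ is even. The Galois-theoretic variant you sketch is a nice complementary phrasing but is not needed.
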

\begin{proof}
  The numbers $n$ and $2n$ share the same set of odd divisors, therefore by Lemmas~\ref{lem:W_count} and~\ref{lem:orbits}
  \begin{equation*}\#C(\F_{q^n}) \equiv \#W(\F_{q^n}) = \sum_{i:  d_i | n} d_i \equiv \sum_{i:  d_i | 2n} d_i  = \#W(\F_{q^{2n}}) \equiv \#C(\F_{q^{2n}}) \pmod{2}.\qedhere
  \end{equation*}
\end{proof}

\begin{corollary}\label{lem:mobius}
For each $n$, we have
\begin{align*}
  n \cdot \#\{i:  d_i = n\} \equiv \sum_{d|n} \mu(n/d) \#C(\F_{q^d}) \pmod 2,
\end{align*}
where $\mu$ is the M\"obius function.
\end{corollary}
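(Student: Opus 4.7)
The plan is to apply classical Möbius inversion to the counting formula from Lemma~\ref{lem:W_count}, and then exchange $\#W(\F_{q^d})$ for $\#C(\F_{q^d})$ via Lemma~\ref{lem:orbits}.

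First, I would set $f(n) \colonequals n \cdot \#\{i : d_i = n\}$, which counts the geometric Weierstrass points lying in Frobenius orbits of size exactly $n$ (equivalently, the $\F_{q^n}$-points of $W$ that do not lie in any proper subfield coming from a smaller orbit). Lemma~\ref{lem:W_count} can then be rewritten as
\[
\#W(\F_{q^n}) = \sum_{i:\, d_i \mid n} d_i = \sum_{d \mid n} d \cdot \#\{i : d_i = d\} = \sum_{d \mid n} f(d),
\]
since each divisor $d$ of $n$ accounts for the orbits whose size is exactly $d$.

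Next, I would apply Möbius inversion to this identity, yielding
\[
f(n) = n \cdot \#\{i : d_i = n\} = \sum_{d \mid n} \mu(n/d)\, \#W(\F_{q^d}).
\]
This is the identity we want, up to swapping $\#W(\F_{q^d})$ for $\#C(\F_{q^d})$.

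Finally, I would reduce modulo 2 and invoke Lemma~\ref{lem:orbits}, which gives $\#W(\F_{q^d}) \equiv \#C(\F_{q^d}) \pmod{2}$ for every $d$, to conclude
\[
n \cdot \#\{i : d_i = n\} \equiv \sum_{d \mid n} \mu(n/d)\, \#C(\F_{q^d}) \pmod{2}.
\]
There is no real obstacle here: the only things to verify are the bookkeeping in the sum defining $\sum_{d \mid n} f(d)$ and the validity of Möbius inversion (which requires no hypothesis beyond the divisor sum identity above). The congruence mod 2 is then immediate from the already-established lemma.
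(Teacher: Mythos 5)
Your proof is correct and follows essentially the same route as the paper: Lemma~\ref{lem:W_count} gives a divisor-sum identity, M\"obius inversion inverts it, and Lemma~\ref{lem:orbits} converts $\#W$ to $\#C$ modulo $2$. The only (immaterial) difference is ordering --- you invert the exact integer identity for $\#W(\F_{q^d})$ and reduce mod $2$ at the end, whereas the paper first reduces to a congruence for $\#C(\F_{q^d})$ and then inverts mod $2$; both are valid.
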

\begin{proof}
  Using Lemma~\ref{lem:W_count} we can restate Lemma~\ref{lem:orbits} as
$$ \#C(\F_{q^n}) \equiv \sum_{d | n} d \cdot \#\{i:  d_i = d\} \pmod{2}. $$
The result follows by applying M\"obius inversion.
\end{proof}

\begin{remark}
For $C$ as above, consider the binary sequence
\[
a_C(n)\coloneqq\#C(\F_{q^n}) \pmod{2}.
\]

Lemma~\ref{lem:orbits} and Corollary~\ref{lem:mobius} imply that $a_C$ determines, and is determined by, the numbers $n \cdot \#\{i:  d_i = n\} \pmod{2}$ for all $n$.  These numbers are encoded in the degree set $d_C$;  they tell us the parity of $\#(i:  d_i = n)$ when $n$ is odd, and give no information when $n$ is even.

More precisely, Lemma~\ref{lem:orbits} and Corollary~\ref{lem:mobius} give us a dictionary between the sequence $a_C$ of parities of point counts and the set
\[
\{d:  d \text{ is odd and appears an odd number of times in } d_C\}.
\]
This is consistent with Proposition~\ref{prop:countingpolys}.
\end{remark}

This is already enough to prove that some sequences of point count parities are inconsistent with the identity $\sum d_i = 2g+2$.  For example:

\begin{lemma}\label{lem:not1}
  If the genus of $C$ is 3, then we cannot have $\#C(\F_{q}) \equiv 0, \#C(\F_{q^3}) \equiv~1$, and $\#C(\F_{q^5}) \equiv 0 \pmod 2$.
\end{lemma}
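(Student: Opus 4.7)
The plan is to translate the three given parity conditions on point counts into parity conditions on the multiplicities of parts of the partition $d_C$ of $2g+2 = 8$, and then show by a short case analysis that no such partition of $8$ exists.

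First, apply Corollary~\ref{lem:mobius} for $n = 1, 3, 5$. For $n = 1$ we get $\#\{i : d_i = 1\} \equiv \#C(\F_q) \equiv 0 \pmod 2$. For $n = 3$, since the only divisors of $3$ are $1$ and $3$ and $\mu(3) = -1$, we get $3 \cdot \#\{i : d_i = 3\} \equiv \#C(\F_{q^3}) - \#C(\F_q) \equiv 1 \pmod 2$, so $\#\{i : d_i = 3\}$ is odd. Similarly, for $n = 5$ we get $5 \cdot \#\{i : d_i = 5\} \equiv \#C(\F_{q^5}) - \#C(\F_q) \equiv 0 \pmod 2$, so $\#\{i : d_i = 5\}$ is even. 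Writing $n_k \coloneqq \#\{i : d_i = k\}$, the hypotheses therefore force
\[
n_1 \text{ even}, \qquad n_3 \text{ odd}, \qquad n_5 \text{ even}, \qquad \sum_k k \cdot n_k = 8.
\]

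Next, I would extract the forced values. Since $n_3$ is odd and positive, $n_3 \geq 1$; and $n_3 \geq 3$ would give $3n_3 \geq 9 > 8$, so $n_3 = 1$. Since $n_5$ is even and $5 n_5 \leq 8$, we must have $n_5 = 0$. The remaining parts must sum to $8 - 3 = 5$ and use only sizes in $\{1,2,4,6,7,8\}$ (no further $3$s or $5$s), with $n_1$ even.

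Finally, check the three possibilities $n_1 \in \{0, 2, 4\}$: if $n_1 = 0$, the remaining $5$ must be expressed using parts from $\{2,4,6,7,8\}$, which is impossible since $5$ is odd and the smallest odd available part is $7 > 5$; if $n_1 = 2$, the remaining $3$ must be expressed using parts from $\{2,4,6,7,8\}$, impossible since the smallest part is $2$ and $3$ is odd; if $n_1 = 4$, the remaining $1$ cannot be expressed using any allowed part. Thus no partition of $8$ satisfies the required parity constraints, and the claim follows.

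The step I expect to require the most care is bookkeeping the forbidden part sizes when enumerating cases for $n_1$, since one must remember that $n_3$ and $n_5$ are already fixed (not merely bounded below), so no further $3$s or $5$s may appear; apart from this, the argument is purely a finite check.
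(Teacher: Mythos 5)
Your proposal is correct and follows essentially the same route as the paper: both derive from Corollary~\ref{lem:mobius} that $\#\{i: d_i=1\}$ is even, $\#\{i: d_i=3\}$ is odd, and $\#\{i: d_i=5\}$ is even, force $\#\{i: d_i=3\}=1$ and $\#\{i: d_i=5\}=0$ by size, and then observe that no partition of $8$ is compatible with these constraints. The paper compresses your three-case check on $n_1$ into the single remark that the leftover sum $5$ is odd while the only available odd part is $1$, forcing $\#\{i: d_i=1\}$ to be odd---a contradiction---but the content is identical.
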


\begin{proof}
  If such a curve $C$ existed, Corollary~\ref{lem:mobius} implies that $\#\{d_i = 1\}$ is even, $\#\{d_i = 3\}$ is odd, and $\#\{d_i = 5\}$ is even.
  Thus $\#\{d_i = 3\} = 1$ and $\#\{d_i = 5\} = 0$, which contradicts $\#\{d_i = 1\}$  being even.
\end{proof}

\subsection{Restrictions on point counts modulo powers of 2}

We have just seen some restrictions on the point counts modulo $2$ of hyperelliptic curves.
In this section we will obtain further restrictions modulo higher powers of 2.

Fix an integer $m \geq 1$.
Let $G$ be the group $\{\pm 1\} \times \Gal{\F_{q^{2^m}}}{\F_q}$ of order $2^{m+1}$.
This acts on the $\F_{q^{2^m}}$-points of $C$, where the first factor acts by the hyperelliptic involution and the second by field automorphisms.  We will study $\#C(\F_{q^{2^m}}) \pmod{2^{m+1}}$ by examining the orbits of the action of $G$.

\begin{proposition}\label{prop:2adicnew}
  If $C$ is a genus $g$ hyperelliptic curve over $\F_q$, then
  \[
  \#C(\F_{q^n}) \equiv 2(q^n + 1) - \#W(\F_{q^n}) \pmod{2^{a+1}},
\]
where $n = 2^a \cdot m \geq 1$ with $m$ odd.
\end{proposition}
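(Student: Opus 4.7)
The plan is to reduce the claim to a divisibility statement about a single integer, the number $I$ of inert $\F_{q^n}$-fibers of the canonical double cover $\pi\colon C \to \P^1$ (branched at $W$), and then prove that divisibility by exhibiting an essentially free action of a cyclic $2$-group.

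For each point $x \in \P^1(\F_{q^n})$, the fiber $\pi^{-1}(x)$ contributes to $C(\F_{q^n})$ either one point (if $x$ lies in $B \coloneqq \pi(W)$), two points (if $x \notin B$ and the fiber is \emph{split}, meaning the relevant value of $f$ is a nonzero square in $\F_{q^n}$), or zero points (if $x \notin B$ and the fiber is \emph{inert}). Since $B$ and $W$ are in $\F_q$-equivariant bijection, $\#B(\F_{q^n}) = \#W(\F_{q^n})$. Writing $I$ for the number of inert points in $\P^1(\F_{q^n})$ and using $\#\P^1(\F_{q^n}) = q^n + 1$, I obtain
\[
\#C(\F_{q^n}) \;=\; \#W(\F_{q^n}) + 2\bigl(q^n + 1 - \#W(\F_{q^n}) - I\bigr) \;=\; 2(q^n+1) - \#W(\F_{q^n}) - 2I,
\]
so the proposition reduces to showing $2^a \mid I$.

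The key elementary input is that, since $q$ is odd, every nonzero element of $\F_{q^{n/2}}$ is a square in $\F_{q^n}$: indeed, $\F_{q^{n/2}}^*$ sits inside the cyclic group $\F_{q^n}^*$ with index $(q^n-1)/(q^{n/2}-1) = q^{n/2}+1$, and $q^{n/2}+1$ is even. Consequently, if $x \in \P^1(\F_{q^{n/2}}) \setminus B$, then the value of $f$ at $x$ (or the leading coefficient of $f$, in the case $x = \infty$ and $\deg f = 2g+2$) lies in $\F_{q^{n/2}}^*$ and is thus a square in $\F_{q^n}$, so the fiber over $x$ is split, not inert. In other words, no inert point of $\P^1(\F_{q^n})$ lies in the subfield locus $\P^1(\F_{q^{n/2}})$.

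Finally, I would invoke the cyclic group $H \coloneqq \Gal{\F_{q^n}}{\F_{q^m}}$ of order $2^a$, which acts on $\P^1(\F_{q^n})$ preserving both $B$ (since $B$ is defined over $\F_q$) and the square/non-square dichotomy in $\F_{q^n}$ (since any field automorphism of $\F_{q^n}$ preserves the unique index-two subgroup of squares). Hence $H$ acts on the inert set. Because $H$ is a cyclic $2$-group, any nontrivial subgroup contains the unique element of order $2$, whose fixed field is $\F_{q^{n/2}}$. Since no inert point lies in $\F_{q^{n/2}}$, every inert point has trivial stabilizer, so each $H$-orbit has size exactly $2^a$. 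This yields $2^a \mid I$ and finishes the proof. The boundary case $a = 0$ ($n$ odd) collapses to $\#C(\F_{q^n}) \equiv \#W(\F_{q^n}) \pmod 2$, which is Lemma~\ref{lem:orbits}. The only mild subtlety to handle carefully is the point at infinity when $\deg f = 2g+2$, but the $\P^1$ viewpoint absorbs this uniformly: the ``value of $f$ at $\infty$'' is interpreted as the leading coefficient, which lies in $\F_q \subseteq \F_{q^{n/2}}$, so the same square argument applies.
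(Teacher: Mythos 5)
Your proof is correct and follows essentially the same route as the paper's: both decompose $\#C(\F_{q^n})$ by fibers of $\pi$ over $\P^1(\F_{q^n})$, observe that the fiber over an unramified point of $\P^1(\F_{q^{n/2}})$ must split because every element of $\F_{q^{n/2}}^{*}$ is a square in $\F_{q^n}$, and conclude that the inert locus has cardinality divisible by $2^a$ via a Galois-orbit argument. Your packaging of that last step as a free action of the order-$2^a$ group $\Gal{\F_{q^n}}{\F_{q^m}}$ is only a mild reorganization of the paper's observation that inert points have degree divisible by $2^a$.
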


\begin{proof}
We will count the $\F_{q^n}$-points of $C$ by considering the fibers of the hyperelliptic map $\pi\colon  C(\F_{q^n}) \to \P^1(\F_{q^n})$.
If $x \in \P^1(\F_{q^n})$, then $\pi^{-1}(x)$ can be computed by extracting the square roots of $f(x)$, where $f$ is a polynomial of degree $2g+1$ or $2g+2$.
Accordingly, $\pi^{-1}(x)$ contains either zero, one, or two $\F_{q^n}$-points, and a preimage of size one occurs if and only if $x \in W$.
Further, if $n$ is even and $x$ is defined over $\F_{q^{n/2}}$ (equivalently, if its degree is not divisible by $2^a$), then the preimage must have size two, since quadratic equations $y^2 = f(x)$ over $\F_{q^{n/2}}$ can be solved over $\F_{q^n}$.
Thus, the preimage may have size zero only if the degree of $x$ is divisible by $2^a$.  In this case, the Galois orbit of $x$ has size divisible by $2^a$, so if there are $\F_{q^n}$-points above $x$, they occur in an orbit whose size is divisible by $2^{a+1}$.
Hence, to count modulo $2^{a+1}$, we may assume that all unramified $\F_{q^n}$-points of $\P^1$ have exactly two preimages defined over $\F_{q^n}$, and subtract the number of $\F_{q^n}$-points of $W$ to obtain the result.
\end{proof}

As with Proposition~\ref{prop:2torsion}, this implies that certain sequences of point counts of hyperelliptic curves (equivalently, certain Weil polynomials of their Jacobians) cannot be realized.

\begin{corollary}\label{cor:counts}
  Assume that the genus of $C$ is $3$.
  If $\#C(\F_q) \equiv 1 \pmod 2$ and $\#C(\F_{q^3}) \equiv 0 \pmod 2$, 
  then we have $\#C(\F_{q^{2^m}}) \equiv -1 \pmod{2^{m+1}}$ for all $m \geq 1$.
\end{corollary}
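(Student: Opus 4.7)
The plan is to translate the two parity hypotheses into restrictions on the Frobenius orbit structure of the Weierstrass points, enumerate the finitely many compatible partitions of $2g+2 = 8$, and then feed the resulting data into Proposition~\ref{prop:2adicnew} together with a standard 2-adic estimate.

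First, I would use Lemmas~\ref{lem:W_count} and~\ref{lem:orbits} to turn the hypotheses into constraints on the degree set $d_C$. The condition $\#C(\F_q) \equiv 1 \pmod 2$ forces $\#\{i: d_i = 1\}$ to be odd, and combining this with $\#C(\F_{q^3}) \equiv 0 \pmod 2$ and $\#W(\F_{q^3}) = \#\{i: d_i = 1\} + 3\cdot \#\{i: d_i = 3\}$ then forces $\#\{i: d_i = 3\}$ to be odd as well. A direct inspection of partitions of $8$ meeting these two parity conditions yields exactly four possibilities:
\[
  \{1,1,1,1,1,3\},\quad \{1,1,1,2,3\},\quad \{1,2,2,3\},\quad \{1,3,4\}.
\]

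Next, for each of these four partitions I would tabulate $\#W(\F_{q^{2^m}}) = \sum_{d_i \mid 2^m} d_i$. All four give $5$ when $m \geq 2$: the $3$'s never contribute, while the remaining parts (drawn from $\{1, 2, 4\}$) all divide $2^m$. For $m = 1$, the first three partitions still give $5$, while $\{1,3,4\}$ gives only $1$ since $4 \nmid 2$.

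Finally, I would apply Proposition~\ref{prop:2adicnew} with $n = 2^m$ and $a = m$ to get
\[
  \#C(\F_{q^{2^m}}) \equiv 2(q^{2^m} + 1) - \#W(\F_{q^{2^m}}) \pmod{2^{m+1}},
\]
and use the elementary fact that $q^{2^m} \equiv 1 \pmod{2^{m+2}}$ for every odd $q$ and every $m \geq 1$ (a short induction starting from $q^2 \equiv 1 \pmod 8$). This yields $2(q^{2^m}+1) \equiv 4 \pmod{2^{m+1}}$, and substituting the tabulated values of $\#W(\F_{q^{2^m}})$ gives $\#C(\F_{q^{2^m}}) \equiv -1 \pmod{2^{m+1}}$ in every case (the exceptional case $m = 1$ with $\{1,3,4\}$ reducing to $\#C(\F_{q^2}) \equiv 4 - 1 \equiv -1 \pmod 4$). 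The only real obstacle is the bookkeeping for $m = 1$ with partition $\{1,3,4\}$, where the part $4$ does not yet contribute to $\#W(\F_{q^{2^m}})$; the corollary ultimately succeeds because all four admissible partitions give the same value of $\#W(\F_{q^{2^m}})$ once $m \geq 2$, making a uniform congruence argument possible.
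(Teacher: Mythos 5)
Your proof is correct and follows essentially the same route as the paper: both deduce from Lemmas~\ref{lem:W_count} and~\ref{lem:orbits} that there is exactly one part equal to $3$ and all other parts lie in $\{1,2,4\}$, and both then conclude via Proposition~\ref{prop:2adicnew} together with $q^{2^m}\equiv 1\pmod{2^{m+1}}$. The only cosmetic difference is that you enumerate the four partitions and treat the $m=1$, $\{1,3,4\}$ case separately, whereas the paper absorbs that case into the single congruence $\#W(\F_{q^{2^m}})\equiv 5\pmod{2^{m+1}}$, since any part equal to $4$ not yet contributing is itself divisible by $2^{m+1}$ when $m=1$.
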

\begin{proof}
  Since $\#C(\F_q)$ is odd, we have that $\#\{d_i = 1\}$ is odd by Lemma~\ref{lem:orbits}.
  Analogously, since $\#\{d_i = 1\}$ is odd and $\#C(\F_{q^3})$ is even, it follows that $\#\{d_i = 3\}$ is odd and hence must equal one.
  Altogether, we have $d_j = 3$ for a unique $j$ and $d_{j'} \in \{1,2,4\}$ for $j' \neq j$.
  Hence $\# \left( \bigcup_{k \geq 1} W(\F_{q^{2^k}}) \right) = 5$, and so $\# W(\F_{q^{2^m}}) \equiv 5 \pmod{2^{m+1}}$.  The result follows by applying Proposition~\ref{prop:2adicnew} to $n = 2^m$, since we have $q^{2^m} \equiv 1 \pmod{2^{m+1}}$ for $q$ odd.
\end{proof}

\subsection{Reproving Theorem~\ref{main-thm-g3}}

The results in the previous subsections can be used to reprove Theorem~\ref{main-thm-g3} as follows:

 \begin{proof}[Alternative proof of Theorem~\ref{main-thm-g3}]
    Suppose that the statement is false, so that there is some hyperelliptic curve $C$ of genus three whose Jacobian has the prescribed Weil polynomial.
    By using Newton's formulae and \cite[Theorem 11.1]{milne}, we can write the point counts $\#C(\F_{q^k})$ in terms of the coefficients of the Weil polynomial for all $k\geq 1$,  as follows:
\begin{align*}
	\#C(\F_q) &= q +1 + a_1;\\
	\#C(\F_{q^2}) &= q^2+1 -  a_1^2 +2a_2;\\
	\#C(\F_{q^3}) &= q^3+1 +a_1^3 -3a_1a_2 + 3a_3;\\
	\#C(\F_{q^4}) &= q^4+1- a_1^4 + 4a_1^2a_2 - 4a_1a_3 - 2a_2^2 +4qa_2; \\
	\#C(\F_{q^5}) &= q^5+1+a_1^5 -5a_1^3a_2 + 5a_1a_2^2 -5qa_1a_2 -5a_2a_3+5q^2a_1.
\end{align*}
We will complete our proof by showing that the restrictions $a_2 \equiv 0 \pmod 2$ and $a_3 \equiv 1 \pmod 2$ on the coefficients force the point counts to fall into the impossible cases listed in the lemmas above.
First consider the case when $a_1\equiv 0\pmod 2$. Expanding the equations above leads to
  \begin{equation*}
    \begin{aligned}
      \#C(\F_{q})   &\equiv 0 \pmod{2},& 
      \#C(\F_{q^2}) &\equiv 2 \pmod{4},\\  
      \#C(\F_{q^3}) &\equiv 1 \pmod{2},& 
      \#C(\F_{q^4}) &\equiv 2 \pmod{8},\\ 
      \#C(\F_{q^5}) &\equiv 0 \pmod{2}.\\ 
    \end{aligned}
  \end{equation*}
In particular, Lemma~\ref{lem:not1} applies, and we reach a contradiction.

Now assume that $a_1 \equiv 1\pmod 2$. The analogous computation gives
\begin{equation*}
    \begin{aligned}
      \#C(\F_{q})   &\equiv 1 \pmod{2},& 
      \#C(\F_{q^2}) &\equiv 1 \pmod{4},\\  
      \#C(\F_{q^3}) &\equiv 0 \pmod{2},& 
      \#C(\F_{q^4}) &\equiv 5 \pmod{8},\\ 
      \#C(\F_{q^5}) &\equiv 0 \pmod{2}.\\ 
    \end{aligned}
  \end{equation*}
Corollary~\ref{cor:counts} applies, and we again reach a contradiction.
\end{proof}

\begin{remark}
  The argument above can be automated, and by iterating over partitions of $2g + 2$, and applying Proposition~\ref{prop:2adicnew}, one can rule out Weil polynomials modulo $2$, similarly to Proposition~\ref{prop:countingpolys}.
  We have verified that the sets of unrealizable Weil polynomials obtained by these two different procedures agree up to genus $g\leq 10$ by using the Newton identities. 
Indeed, we expect both sets to match for any $g$, although we are unable to prove it in general.
\end{remark}

\section{Experimental data}\label{sec:experimentaldata}

The main results of this paper were inspired by experimental data, which we include here to illustrate the phenomena.
By performing an exhaustive search, the number of Jacobians of hyperelliptic curves in each isogeny class over $\F_q$ (up to isomorphism of principally polarized abelian varieties) has been computed for threefolds and prime powers $q \le 13$ that are either prime or odd.
Similarly, by iterating over the isomorphism classes of smooth plane quartic curves, the number of isogeny classes of abelian threefolds which contain the Jacobian of a smooth plane quartic curve has been computed for $q = 2, 3$, and $5$.
Both searches were done by Andrew Sutherland using the techniques developed in~\cite{smalljac, censusK3}, and the data have been incorporated in the \emph{$L$-functions and Modular Forms Database}~\cite{lmfdb}; see \url{www.LMFDB.org/Variety/Abelian/Fq/}\@.

By combining these data sets, one can also deduce which isogeny classes of abelian threefolds do not contain a Jacobian for $q = 2, 3$, and $5$, see Table~\ref{tab:jacobians}.%
{\renewcommand{\arraystretch}{1.2}
\begin{table}[!ht]
\addtocounter{equation}{1}
\begin{tabular}{l|cc|cc|r}
\multirow{2}{*}{$q$} &
\multicolumn{4}{c|}{Contains Jacobian} &
\multirow{2}{*}{Total}
\\
   & Yes & No & Hyperelliptic & Quartic curve &
\\
\hline
\multirow{2}{*}{\numprint{2}} &
\href{https://www.lmfdb.org/Variety/Abelian/Fq/?g=3&q=2&jacobian=yes}{\numprint{108}} &
\href{https://www.lmfdb.org/Variety/Abelian/Fq/?g=3&q=2&jacobian=no}{\numprint{107}} &
\href{https://www.lmfdb.org/Variety/Abelian/Fq/?q=2&g=3&hyp_cnt=1-}{\numprint{59}} &
\numprint{73} &
\multirow{2}{*}{\numprint{215}}
\\
& 50.23\% & 49.77\% & 27.44\% & 33.95\%
\\
\multirow{2}{*}{\numprint{3}} &
\href{https://www.lmfdb.org/Variety/Abelian/Fq/?g=3&q=3&jacobian=yes}{\numprint{479}} &
\href{https://www.lmfdb.org/Variety/Abelian/Fq/?g=3&q=3&jacobian=no}{\numprint{198}} &
\href{https://www.lmfdb.org/Variety/Abelian/Fq/?q=3&g=3&hyp_cnt=1-}{\numprint{297}} &
\numprint{389} &
\multirow{2}{*}{\numprint{677}}
\\
& 70.75\% & 29.25\% & 43.87\% & 57.46\%
\\
\multirow{2}{*}{\numprint{5}} &
\href{https://www.lmfdb.org/Variety/Abelian/Fq/?g=3&q=5&jacobian=yes}{\numprint{2611}} &
\href{https://www.lmfdb.org/Variety/Abelian/Fq/?g=3&q=5&jacobian=no}{\numprint{342}} &
\href{https://www.lmfdb.org/Variety/Abelian/Fq/?q=5&g=3&hyp_cnt=1-}{\numprint{1723}} &
\numprint{2471} &
\multirow{2}{*}{\numprint{2953}}
\\
& 88.42\% & 11.58\% & 58.35\% & 83.68\%
\end{tabular}

\caption{Types of Jacobians per isogeny classes of abelian threefolds.}
\label{tab:jacobians}
\end{table}
}%
These data sets, and more precisely the multisets of virtual point counts modulo 2 and 4 that can be extracted from this data, provide motivation for Theorem~\ref{main-thm-g3}.

In Figure~\ref{fig:effectiveness-main-thm-g3}, we compare the proportion of isogeny classes of abelian threefolds over $\F_q$ which do not contain a hyperelliptic Jacobian for  $q \leq 13$ with the proportion of such isogeny classes which are ruled out via Theorem~\ref{main-thm-g3}, giving us some insight into the efficiency of Theorem~\ref{main-thm-g3} as $q$ grows.
\begin{figure}[ht!]
  \addtocounter{equation}{1}
	\includegraphics[scale=0.75]{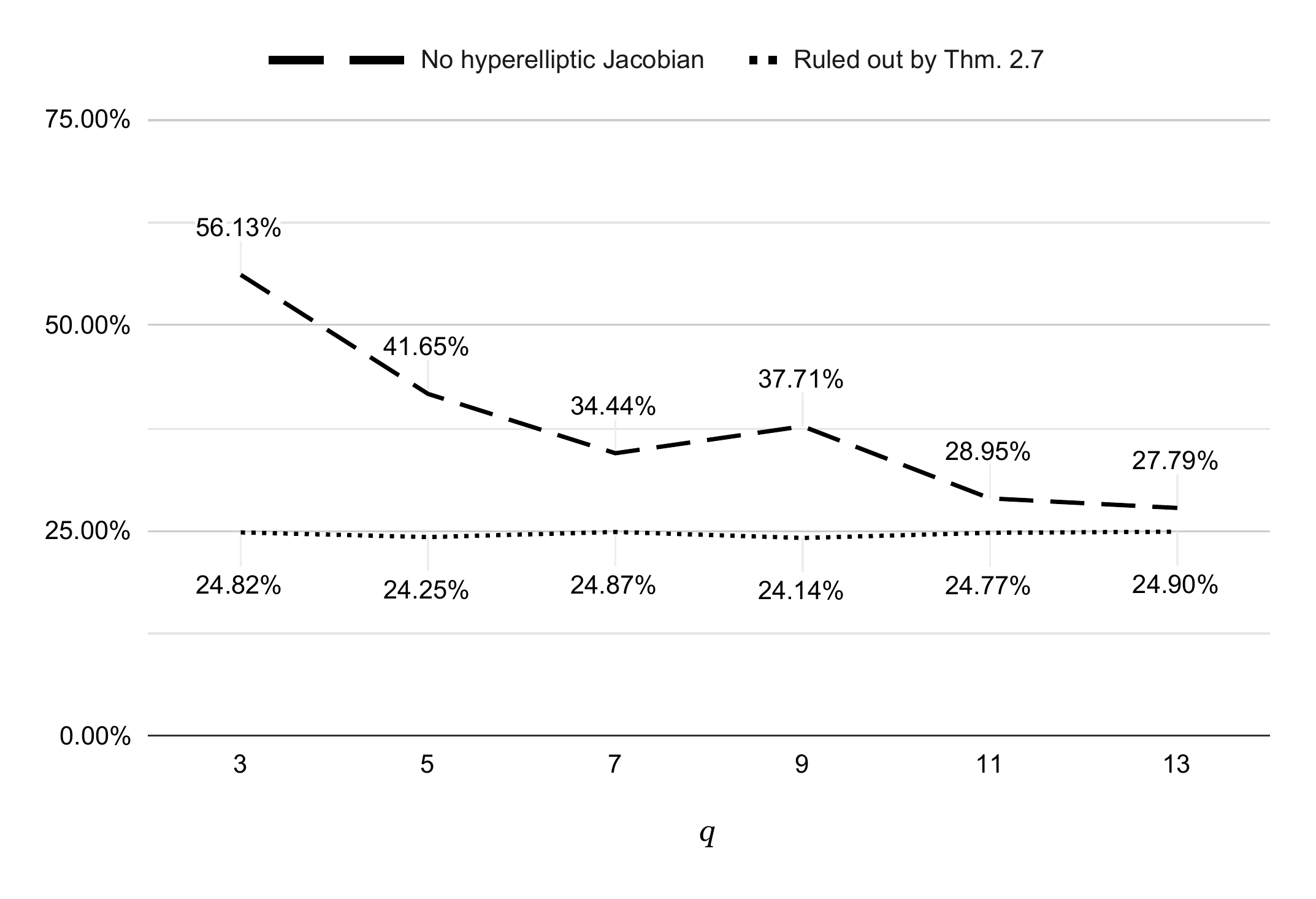}
	\caption{Effectiveness of Theorem~\ref{main-thm-g3}.}
  \label{fig:effectiveness-main-thm-g3}
\end{figure}
Examples of isogeny classes which do not contain a (hyperelliptic) Jacobian but are not ruled out by Theorem~\ref{main-thm-g3} are given by the following Weil polynomials, cf.~isogeny classes \href{https://www.lmfdb.org/Variety/Abelian/Fq/3/3/a_ab_ac}{\texttt{3.3.a\_ab\_ac}} and \href{https://www.lmfdb.org/Variety/Abelian/Fq/3/5/ac_j_aw}{\texttt{3.5.ac\_j\_aw}} on the LMFDB~\cite{lmfdb}:
\begin{align*}
	Z_1(t) &= t^6 -t^4 -2t^3 -3t^2 + 27,\\
  Z_2(t) &= t^6 -2t^5 + 9t^4 -22t^3 + 45t^2 -50t +125.
\end{align*}
In fact, neither isogeny class contains the Jacobian of any curve.  These isogeny classes correspond to the row of Table~\ref{tab:partitions_genus_3} with the most partitions.

In Table~\ref{tab:proportions_genus_3}, we present the number of isogeny classes which are ruled out by Theorem~\ref{main-thm-g3}, for $17 \leq q \leq 31$.
The data show that the proportion of isogeny classes ruled out by Theorem~\ref{main-thm-g3} quickly approaches 25\%, as expected; see Corollary~\ref{cor:hq}.
Furthermore, naive extrapolation seems to indicate that the proportion of isogeny classes of abelian threefolds over $\F_q$ which do not contain a hyperelliptic Jacobian approaches $25\%$ from above as $q\to\infty$ ranges over odd prime powers, while the proportion of the isogeny classes that are ruled out via Theorem~\ref{main-thm-g3} approaches $25\%$ from below.  Similarly, in Tables~\ref{tab:proportions_genus_4} and~\ref{tab:proportions_genus_5} we display the number of isogeny classes which cannot contain a hyperelliptic Jacobian by Proposition~\ref{prop:2torsion}, and as before, we observe proportions which are already very close to the ones attained in the $q$-limit; see Corollary~\ref{cor:hq}.

{\renewcommand{\arraystretch}{1.2}
  \begin{table}[!ht]
    \addtocounter{equation}{1}
    \begin{tabular}{c|ccccccc}
      $q$ & 17 & 19 & 23 & 25 & 27 & 29 & 31
      \\
      \hline
      Number of isogeny classes &
      \numprint{112283} &
      \numprint{156589} &
      \numprint{277517} &
      \numprint{332166} &
      \numprint{333695} &
      \numprint{555843} &
      \numprint{678957}
      \\
      \hline
      \multirow{2}{*}{Inadmissible by Thm.~\ref{main-thm-g3}} &
      \numprint{27974} &
      \numprint{39034} &
      \numprint{69268} &
      \numprint{82564} &
      \numprint{83350} &
      \numprint{138730} &
      \numprint{169574}
      \\
                        &
      24.91\% &
      24.93\% &
      24.96\% &
      24.86\% &
      24.98\% &
      24.96\% &
      24.98\%
    \end{tabular}
    \caption{Isogeny classes ruled out by Proposition~\ref{prop:2torsion} for genus 3 and small $q$.}
    \label{tab:proportions_genus_3}
  \end{table}
}

{\renewcommand{\arraystretch}{1.2}
  \begin{table}[!ht]
    \addtocounter{equation}{1}
    \begin{tabular}{c|cccccc|c}
      $q$ & 3 & 5 & 7 & 9 & 11 & 13 & $q\rightarrow\infty$
      \\
      \hline
      Number of isogeny classes &
      \numprint{10963} &
      \numprint{132839} &
      \numprint{705593} &
      \numprint{2232114} &
      \numprint{6718947} &
      \numprint{15477119}
      \\
      \hline
      \multirow{2}{*}{Inadmissible by Prop.~\ref{prop:2torsion}} &
      \numprint{3856} &
      \numprint{48910} &
      \numprint{262564} &
      \numprint{829189} &
      \numprint{2513570} &
      \numprint{5794772}
      \\
             &
      35.17\% &
      36.82\% &
      37.21\% &
      37.15\% &
      37.41\% &
      37.44\% &
      37.50\%
    \end{tabular}
    \caption{Isogeny classes ruled out by Proposition~\ref{prop:2torsion} for genus 4 and small $q$. }
    \label{tab:proportions_genus_4}
  \end{table}
}

{\renewcommand{\arraystretch}{1.2}
  \begin{table}[!ht]
    \addtocounter{equation}{1}
    \begin{tabular}{c|cc|c}
      $q$ & 3 & 5 & $q\rightarrow\infty$
      \\
      \hline
      Number of isogeny classes &
      \numprint{267465} &
      \numprint{11902325}
      \\
      \hline
      \multirow{2}{*}{Inadmissible by Prop.~\ref{prop:2torsion}} &
      \numprint{137866} &
      \numprint{6286570}
      \\
             &
      51.55\% &
      52.82\% &
      53.12\%
    \end{tabular}
    \caption{Isogeny classes ruled out by Proposition~\ref{prop:2torsion} in genus 5 and small $q$.}
    \label{tab:proportions_genus_5}
  \end{table}
}

To generate Tables~\ref{tab:proportions_genus_3},~\ref{tab:proportions_genus_4} and~\ref{tab:proportions_genus_5}, we enumerated all isogeny classes through their Weil polynomials:
First, we enumerated Weil polynomials of degree $2g$, and then we filtered by the Honda--Tate condition on its factors, see~\cite[Chapter 2]{waterhouse}, to only keep the ones that correspond to an isogeny class of abelian varieties of dimension $g$.
We enumerated Weil polynomials using \defi{root-unitary}\footnote{\url{https://github.com/kedlaya/root-unitary}}, which implements the techniques introduced in~\cite{rootunitary}.


\begin{thebibliography}{{LMF}19}

\bibitem[DH98]{dh}
Stephen~A. DiPippo and Everett~W. Howe.
\newblock Real polynomials with all roots on the unit circle and abelian
  varieties over finite fields.
\newblock {\em J. Number Theory}, 73(2):426--450, 1998.

\bibitem[DH00]{dh-correction}
Stephen~A. DiPippo and Everett~W. Howe.
\newblock Corrigendum: ``{R}eal polynomials with all roots on the unit circle
  and abelian varieties over finite fields'' [{J}. {N}umber {T}heory {\bf 73}
  (1998), no. 2, 426--450; {MR}1657992 (2000c:11101)].
\newblock {\em J. Number Theory}, 83(1):182, 2000.

\bibitem[FS09]{fs}
Philippe Flajolet and Robert Sedgewick.
\newblock {\em Analytic combinatorics}.
\newblock Cambridge University Press, Cambridge, 2009.

\bibitem[HNR09]{hnr}
Everett~W. Howe, Enric Nart, and Christophe Ritzenthaler.
\newblock Jacobians in isogeny classes of abelian surfaces over finite fields.
\newblock {\em Ann. Inst. Fourier (Grenoble)}, 59(1):239--289, 2009.

\bibitem[Hol04]{holden}
Joshua Holden.
\newblock Abelian varieties over finite fields with a specified characteristic
  polynomial modulo {$l$}.
\newblock {\em J. Th\'{e}or. Nombres Bordeaux}, 16(1):173--178, 2004.

\bibitem[Ked08]{rootunitary}
Kiran~S. Kedlaya.
\newblock Search techniques for root-unitary polynomials.
\newblock In {\em Computational arithmetic geometry}, volume 463 of {\em
  Contemp. Math.}, pages 71--81. Amer. Math. Soc., Providence, RI, 2008.

\bibitem[KS08]{smalljac}
Kiran~S. Kedlaya and Andrew~V. Sutherland.
\newblock Computing {$L$}-series of hyperelliptic curves.
\newblock In {\em Algorithmic number theory}, volume 5011 of {\em Lecture Notes
  in Comput. Sci.}, pages 312--326. Springer, Berlin, 2008.

\bibitem[KS16]{censusK3}
Kiran~S. Kedlaya and Andrew~V. Sutherland.
\newblock A census of zeta functions of quartic {K}3 surfaces over {$\mathbb
  F_2$}.
\newblock {\em LMS J. Comput. Math.}, 19(suppl. A):1--11, 2016.

\bibitem[{LMF}19]{lmfdb}
The {LMFDB Collaboration}.
\newblock The {L}-functions and modular forms database.
\newblock \url{http://www.lmfdb.org}, 2019.
\newblock [Online; accessed 30 October 2019].

\bibitem[Mil86]{milne}
J.~S. Milne.
\newblock Jacobian varieties.
\newblock In {\em Arithmetic geometry ({S}torrs, {C}onn., 1984)}, pages
  167--212. Springer, New York, 1986.

\bibitem[MN02]{mn}
Daniel Maisner and Enric Nart.
\newblock Abelian surfaces over finite fields as {J}acobians.
\newblock {\em Experiment. Math.}, 11(3):321--337, 2002.
\newblock With an appendix by Everett W. Howe.

\bibitem[Mum07]{mumford}
David Mumford.
\newblock {\em Tata lectures on theta. {II}}.
\newblock Modern Birkh\"{a}user Classics. Birkh\"{a}user Boston, Inc., Boston,
  MA, 2007.
\newblock Jacobian theta functions and differential equations, With the
  collaboration of C. Musili, M. Nori, E. Previato, M. Stillman and H. Umemura,
  Reprint of the 1984 original.

\bibitem[Nar09]{nart}
Enric Nart.
\newblock Counting hyperelliptic curves.
\newblock {\em Adv. Math.}, 221(3):774--787, 2009.

\bibitem[NZM91]{nzm}
Ivan Niven, Herbert~S. Zuckerman, and Hugh~L. Montgomery.
\newblock {\em An introduction to the theory of numbers}.
\newblock John Wiley \& Sons, Inc., New York, fifth edition, 1991.

\bibitem[Wat69]{waterhouse}
William~C. Waterhouse.
\newblock Abelian varieties over finite fields.
\newblock {\em Ann. Sci. \'{E}cole Norm. Sup. (4)}, 2:521--560, 1969.

\end{thebibliography}
\end{document}